\theoremstyle{plain}
\newtheorem{thm}{Theorem} 
\newtheorem{lem}{Lemma}
\newtheorem{prop}{Proposition}
\providecommand{\skp}[2]{\langle#1,#2\rangle}
\providecommand{\sm}{\setminus}
\providecommand{\N}{\mathbb{N}}
\providecommand{\R}{\mathbb{R}}
\providecommand{\Z}{\mathbb{Z}}
\providecommand{\C}{\mathbb{C}}
\providecommand{\eps}{\varepsilon}
\providecommand{\ov}{\overline}
\providecommand{\skp}[2]{\langle#1,#2\rangle}
\DeclareMathOperator{\sign}{sign}
\DeclareMathOperator{\ind}{ind}
\DeclareMathOperator{\ran}{ran}
\DeclareMathOperator{\Id}{Id}
\DeclareMathOperator{\Real}{Re}
\DeclareMathOperator{\Imag}{Im}
\DeclareMathOperator{\dist}{dist}
\DeclareMathOperator{\spa}{span}
\DeclareMathOperator{\pr}{pr}
\renewcommand{\qed}{\hfill $\Box$}
\begin{document}

\allowdisplaybreaks

\title{Global secondary bifurcation, symmetry breaking and period-doubling}

\author{Rainer Mandel}
\address{R. Mandel \hfill\break
Karlsruhe Institute of Technology \hfill\break
Institute for Analysis \hfill\break
Englerstra{\ss}e 2 \hfill\break
D-76131 Karlsruhe, Germany}
\email{Rainer.Mandel@kit.edu}
\date{}

\subjclass[2000]{Primary: 47J15,\;
Secondary: 34C23, 35B32}
\keywords{Secondary Bifurcation, Global Bifurcation, Symmetry Breaking, Period-Doubling Bifurcation,
Lugiato-Lefever equation}

\begin{abstract}
  In this paper we provide a criterion for global secondary bifurcation via symmetry breaking. As an
  application, the occurrence of period-doubling bifurcations for the Lugiato-Lefever equation is proved.
\end{abstract}

\maketitle
\allowdisplaybreaks

\section{Introduction}

  The aim of this paper is to provide a sufficient condition for global secondary bifurcation via symmetry
  breaking for equations of the form 
  \begin{equation} \label{eq}
    F(x,\lambda)=0
  \end{equation}
  where $x\in X$ belongs to a Banach space and $\lambda\in\R$ is a real parameter. Bifurcation theory is about
  finding solutions near a given family of trivial solutions of \eqref{eq}. For instance, if
  $F(0,\lambda)=0$ for all $\lambda\in\R$ then the trivial solution family is given by $\{(0,\lambda):\lambda\in
  \R\} \subset X\times\R$. More generally, if $\mathcal T\subset X\times\R$ is a family of 
  solutions then $(x,\lambda)\in\mathcal T$ is a bifurcation point with respect to $\mathcal T$ if
  there is a sequence of solutions $(x_n,\lambda_n)\notin \mathcal T$ converging to $(x,\lambda)$. In this
  case one speaks of (primary) bifurcation with respect to $\mathcal T$ and there are many
  powerful theorems that allow to detect such bifurcations under suitable assumptions on $F$. Examples for
  such theorems are the celebrated bifurcation results due to Marino, B\"ohme \cite{Marino,Boehme}, Crandall,
  Rabinowitz \cite{CR_bif_simple} or Krasnoselski, Rabinowitz
  \cite{Kras:Topological,Rab:global}. The latter ones even allow to conclude that the bifurcating solutions
  lie on a nontrivial connected set of solutions $\mathcal C\subset X\times\R$. Such a set is sometimes
  called a primary solution branch.
  
  \medskip 
  
  Our interest lies in secondary bifurcation, which we define, roughly speaking, as bifurcation with
  respect to such primary solution branches. We refer to
  Section~\ref{sec:symbreak} for precise definitions. Our main result (Theorem~\ref{thm_symbreak}) will
  provide sufficient conditions for the occurrence of secondary bifurcation without any explicit
  knowledge of the primary branch. As a byproduct, this secondary bifurcation comes with the phenomenon of
  symmetry-breaking and it will be shown to be global in a  sense that we will make
  precise later. As far as we know, such an analysis has not been done before. Actually, very few analytical papers
  deal with secondary bifurcations. In the paper \cite{BKR_sec_bif} by Bauer, Keller and Reiss it is outlined
  how local secondary bifurcations may occur for eqfations with two real parameters near a degenerate
  trivial solution. However, their approach is local in nature and it is not rigorously stated nor
  proved in an abstract setting, which makes their results hardly comparable to those
  that we present in this paper. One example for a secondary bifurcation analysis based on an almost
  explicit knowledge of the primary solution branch is  
  presented in the paper~\cite{KuMoTsYo_2ndarybif} in the context of a one-dimensional nonlocal Allen-Cahn
  equation. An interesting result related to the nonexistence of secondary bifurcation points is contained
  in~\cite{Miy_nonexistence}.
  %\r{Keener und Referenzen} \cite{Keener_secondary}
    
  \medskip
  
  The literature on symmetry breaking results is much larger and we mention at least some of the
  available results. We focus on those that apply
  to the study of nonradial solutions of nonlinear elliptic PDEs of the form
  \begin{equation} \label{eq:LaneEmden}
    -\Delta u  = f(u,\lambda) \quad\text{in }\Omega,\qquad u\in H_0^1(\Omega)
  \end{equation}
  where $\Omega$ is an annulus in $\R^n$. In the case of a ball the celebrated symmetry result of
  Gidas, Ni and Nirenberg \cite{GNN_symmetry} shows that all positive solutions of \eqref{eq:LaneEmden} are
  automatically radially symmetric if $f(\cdot,\lambda)$ is continuously differentiable. The corresponding statement
  for annuli is not true for all $f$, as was shown variationally by Coffman~\cite{Coffman:nonlinear_bvp} for
  $f(z,\lambda)= -z+z^{2m+1}$ and $m\in\N$. Srikanth \cite{Sri:symbreak} considered symmetry breaking for
  \eqref{eq:LaneEmden} when the nonlinearity is given by $f(z,\lambda)=|z|^{p-1}z+\lambda z$ with
  $p>1,\lambda\in\R$ and annuli $\Omega$ such that the inner radius almost equals the
  outer one.
  Computing the Leray-Schauder index along the uniquely determined curve of positive radial solutions he discovered
  nonradial solutions via symmetry breaking bifurcation from this curve. Similarly, much is known
  about the local and global shape of the nonradial solutions bifurcating from the curve of radial solutions
  for the Gelfand problem~\eqref{eq:LaneEmden} with $f(z,\lambda)=\lambda e^z$, 
  see~\cite{Lin:nonradial_bifurcation} (Theorem~4.4) and \cite{Dan_global_breaking} (Theorem~2). Notice that
  nonradial bifurcation results from radial solutions of \eqref{eq:LaneEmden} are also available on balls (see
  for instance Theorem~2.1 in~\cite{Cer:sym_bre}  or Theorem~5.4 in~\cite{SmoWas_symbre}), but the
  bifurcation points have to be sign-changing radial solutions by the above-mentioned symmetry result of
  Gidas, Ni and Nirenberg.
  A symmetry breaking result for equations of the form \eqref{eq:LaneEmden} with a forcing term is due to
  Dancer, see Theorem~2 in~\cite{Dan:breaking_of_symmetries}. Let us finally mention an interesting recent
  contribution showing a completely different way of symmetry breaking in the context of nonlinear elliptic
  systems via variational methods~\cite{BrClMar:symbrea}.

  \medskip

%    As described in the previous section our result requires comparatively little
%    information about the global shape of the bifurcating Rabinowitz continuum $\mathcal C_Y$: it has to
%    be bounded and it has to include all nontrivial solutions in $X\times\R$ close to the bifurcation points
%    from $\mathcal C_Y\cap \mathcal T$. We outlined above how this requirement may be verified using the
%    Crandall-Rabinowitz theorem. The crucial symmetry breaking condition \b{from \ldots} can be checked if the
%    Leray-Schauder indices may be computed near the bifurcation points belonging to  $\mathcal C_Y\cap
%    \mathcal T$. We present one application where this strategy may be implemented successfully.

   Let us briefly describe how this paper is organized. In the following section we recall Rabinowitz' global
   bifurcation theorem along with a refinement due to Dancer in a slightly more general framework than usual.
   Based on this theorem we will state and prove our main result on symmetry breaking via secondary bifurcation in Section~\ref{sec:symbreak}.
   In Section~\ref{sec:Applications}, we apply these abstract results in order to detect
   period-doubling secondary bifurcations for the Lugiato-Lefever equation. Actually, this application
   motivates the above-mentioned generalization of Rabinowitz' theorem. The proof of this
   result closely follows the original one and is therefore postponed to Appendix A. In Appendix B we comment
   on the regularity assumptions on $F$ that are used in the proof.  
   We emphasize that our secondary bifurcation analysis will not rely on local
   considerations or on the fact that the primary bifurcation branch is actually explicitly known. 
   In particular, our results on period-doubling bifurcation for the Lugiato-Lefever equation can not be
   proved by means of a local period-doubling bifurcation result such as Theorem~I.14.2. in Kielh\"ofer's
   book~\cite{Kielhoefer}.
  
  \section{On Rabinowitz' Global Bifurcation Theorem} \label{sec:rabinowitz}
  
  %In this section we present our main result on symmetry breaking via secondary bifurcation. Since it
  %strongly relies on celebrated contributions by Rabinowitz and Dancer, let us briefly recall those first. 
  In Theorem~1.3 of the paper~\cite{Rab:global} Rabinowitz studied the equation $F(x,\lambda)=0$ where
  $F(x,\lambda)=x-\lambda Lx-H(x,\lambda)$, $L$ is a compact linear map and $H:X\times\R\to X$ is compact and
  continuous with $H(x,\lambda)=o(\|x\|)$ locally uniformly with respect to $\lambda$ as $x\to 0$. Roughly
  speaking, he globalized Krasnoselski's Bifurcation Theorem~\cite{Kras:Topological} by proving that solutions bifurcating
  from the trivial solution $x=0$ at some characterictic value $\lambda_0$ of $L$ of odd algebraic
  multiplicity lie on a continuum of solutions $\mathcal C\subset X\times\R$ that is unbounded or returns to
  the trivial solution family at some other characteristic value of $L$. Recall that the characteristic values
  of $L$ are the reciprocals of its eigenvalues.
  %Extended versions of this theorem can for instance be found in \cite{Dancer:global_branches} (Theorem
  %1) or in the book by Kielh\"ofer \cite{Kielhoefer} (Theorem II.3.3). 
  Later, Dancer remarked that if $\mathcal C$ is bounded and intersects the trivial solution family at
  mutually different $\lambda_0,\ldots,\lambda_k$, then the jumps of the Leray--Schauder indices at the
  trivial solutions $(0,\lambda_0),\ldots,(0,\lambda_k)$ have to sum up to zero, see Theorem~1
  in~\cite{Dancer_On_the_structure}. In particular, $\mathcal C$ contains an even number of
  trivial solutions $(0,\lambda_j)$ where $\lambda_j$ is a characteristic value of odd multiplicity.   
  Both Rabinowitz' and Dancer's contributions are fundamental for the rest of  this paper.
    
  \medskip
  
  In our result on secondary bifurcations we want to make use of the above-mentioned results for equations
  $F(x,\lambda)=0$ in a more general setting, where $F$ and the trivial   solution
  family $\mathcal T\subset X\times\R$ satisfy less restrictive assumptions. This is motivated by our
  application to the Lugiato--Lefever equation that we will discuss in Section~\ref{sec:Applications}. 
  We will prove these results under the following assumptions on $F$ and $\mathcal T$:
  \begin{itemize}
    \item[(A1)] $F\in C(X\times\R,X)$ is a compact perturbation of the identity,
%     \item[(A2)] $\mathcal T=\{(\bar x(t),\bar \lambda(t)):t\in I\} \subset X\times\R$ is a closed
%     $C^1-$curve parametrized over some \b{open} interval $I\subset\R$ such that $F|_{\mathcal T}=0$ and the
%     set $\Xi$ of degenerate points on $\mathcal T$ is discrete in $X\times\R$. \b{or periodic curve?} \\
%     \b{Closed $C^1$-manifold wothout boundary}
    \item[(A2)] $\mathcal T\subset X\times\R$ is a closed embedded 1-submanifold of class $C^1$ such that
    $F|_{\mathcal T}=0$, $F$ is locally uniformly differentiable along $\mathcal T$ with $F'\in C(\mathcal
    T,X\times\R)$ and the subset of degenerate solutions on $\mathcal T$ is discrete.
  \end{itemize}
  Several remarks are in order. Firstly, (A1) means that the map $(x,\lambda)\mapsto x-F(x,\lambda)$ is
  continuous and compact on $X\times\R$. This ensures that Leray-Schauder degree theory is applicable so that
  the main degree-theoretic ideas of Rabinowitz' proof carry over. In the case $\mathcal
  T=\{(0,\lambda):\lambda\in \R\}$ this is well-known, see for instance Theorem II.3.3
  in~\cite{Kielhoefer}. Concerning (A2), we first point out that $\mathcal
  T$ need not be unbounded; it may as well be a simple closed $C^1$-curve in $X\times\R$. We say that a point
  $(x_0,\lambda_0)\in\mathcal T$ is degenerate if $\mathcal T$ is locally
  parametrized by a regular curve $(\bar x,\bar \lambda):(t_0-\eps,t_0+\eps)\to X\times\R$ such that $(\bar
  x(t_0),\bar \lambda(t_0))=(x_0,\lambda_0)$ and 
  $$
    \ker(F'(x_0,\lambda_0))\varsupsetneqq \spa\{(\bar x'(t_0),\bar \lambda'(t_0))\}.
  $$ 
  Here, $F':X\times\R\to X$ stands for the Fr\'{e}chet derivative of $F$. Notice that this
  notion of degeneracy does not depend on the chosen parametrization. Locally uniform differentiability 
  along $\mathcal T$ means that $F'$ exists at all elements of $\mathcal T$ such that for all local
  $C^1-$parametrizations $(\bar x,\bar\lambda):I\to\R$ of $\mathcal T$ and all convergent sequences
  $(x_n),(\lambda_n),(t_n)$ with $(x_n,\lambda_n)-(\bar x(t_n),\bar\lambda (t_n))\to (0,0)$ we have 
  $$ 
    \frac{\|F(x_n,\lambda_n)-F(\bar x(t_n),\bar \lambda(t_n)) - F'(\bar x(t_n),\bar\lambda(t_n))[(x_n-\bar
    x(t_n),\lambda_n-\bar\lambda(t_n))]\|}{
    \|x_n-\bar x(t_n)\|+|\lambda_n-\bar\lambda(t_n)|} \to 0
    \quad\text{as }n\to\infty. 
  $$
  For instance, this condition holds provided $F$ is continuously differentiable in an open neighbourhood of
  $\mathcal T$. This regularity assumption on $F$ allows to conclude that bifurcation points
  with respect to $\mathcal T$ are necessarily degenerate and therefore do not accumulate, which will be
  essential in Theorem~\ref{thm_symbreak}. For the convenience of the reader we include a proof. 
  
  \begin{prop}\label{prop:unif_diff} 
    Assume (A1),(A2). Then the set of bifurcation points with respect to $\mathcal T$ is discrete.
  \end{prop}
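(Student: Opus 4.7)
The plan is to reduce the problem to showing that every bifurcation point with respect to $\mathcal T$ must be degenerate; the discreteness of bifurcation points then follows immediately from the discreteness of degenerate solutions asserted in (A2). Accordingly, I would fix a point $(x_0,\lambda_0)\in\mathcal T$ that is \emph{not} degenerate and prove that it is not a bifurcation point. Let $(\bar x,\bar\lambda):(t_0-\eps,t_0+\eps)\to X\times\R$ be a local $C^1$-parametrization of $\mathcal T$ with $(\bar x(t_0),\bar\lambda(t_0))=(x_0,\lambda_0)$; non-degeneracy then says that $\ker F'(x_0,\lambda_0)$ equals the one-dimensional tangent space $\spa\{(\bar x'(t_0),\bar \lambda'(t_0))\}$.

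For the analytic set-up I would exploit the Fredholm structure coming from (A1): since $F$ is a compact perturbation of the identity, the derivative $F'(x_0,\lambda_0):X\times\R\to X$ is a compact perturbation of the projection $(h,\mu)\mapsto h$, hence Fredholm of index $1$. A one-dimensional kernel therefore forces $F'(x_0,\lambda_0)$ to be surjective. I would then pick a closed topological complement $Y\subset X\times\R$ of the tangent space, so that $L:=F'(x_0,\lambda_0)|_Y:Y\to X$ is a Banach-space isomorphism. By continuity of $F'$ along $\mathcal T$ (again from (A2)) and a Neumann series argument, $F'(\bar x(t),\bar\lambda(t))|_Y:Y\to X$ remains an isomorphism with uniformly bounded inverse for $t$ in some neighbourhood of $t_0$.

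The next step is a local tubular decomposition: because $\mathcal T$ is an embedded $C^1$-submanifold transverse to $Y$, for every $(x,\lambda)$ sufficiently close to $(x_0,\lambda_0)$ there exist unique $t$ near $t_0$ and $(y,\mu)\in Y$ with $(x,\lambda)=(\bar x(t),\bar\lambda(t))+(y,\mu)$, and moreover $(x,\lambda)\in\mathcal T$ iff $(y,\mu)=0$. Now assume for contradiction that there is a sequence of solutions $(x_n,\lambda_n)\to(x_0,\lambda_0)$ with $(x_n,\lambda_n)\notin\mathcal T$, and write $(x_n,\lambda_n)=(\bar x(t_n),\bar\lambda(t_n))+(y_n,\mu_n)$ with $t_n\to t_0$ and $(y_n,\mu_n)\in Y\setminus\{0\}$, $(y_n,\mu_n)\to 0$. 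Using $F(\bar x(t_n),\bar\lambda(t_n))=0$ and the locally uniform differentiability in (A2), one obtains
$$
0=F(x_n,\lambda_n)=F'(\bar x(t_n),\bar\lambda(t_n))\big[(y_n,\mu_n)\big]+o\big(\|y_n\|+|\mu_n|\big).
$$
Applying the uniformly bounded inverse of $F'(\bar x(t_n),\bar\lambda(t_n))|_Y$ yields $\|(y_n,\mu_n)\|\le C\cdot o(\|(y_n,\mu_n)\|)$, a contradiction for large $n$.

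The main obstacle, and presumably the reason for formulating (A2) in this rather delicate way, is that $F$ is not assumed to be $C^1$ on a full neighbourhood of $\mathcal T$, only differentiable along $\mathcal T$ in the specified locally uniform sense; consequently the standard implicit function theorem is not immediately applicable. The locally uniform expansion in (A2) is exactly the quantitative information needed to replace the missing $C^1$-control and run the contraction-type argument above, while continuity of $F'$ along $\mathcal T$ supplies the Neumann series estimate that keeps $F'(\bar x(t),\bar\lambda(t))|_Y$ invertible. Once that technical point is addressed, the rest of the argument is essentially a Lyapunov--Schmidt reduction in disguise.
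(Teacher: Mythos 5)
Your argument is correct and establishes the same key claim as the paper --- that every bifurcation point with respect to $\mathcal T$ is necessarily degenerate, after which discreteness is immediate from (A2) --- but you run it in the contrapositive and close it with a different tool. The paper takes a bifurcation point, splits $X\times\R=\spa\{\psi\}\oplus W$ along the tangent direction exactly as you do, writes the nearby nontrivial solutions as $\bar w(t_n)+w_n+\bar\mu(t_n)\psi$ with $w_n\in W\sm\{0\}$, and uses the locally uniform differentiability to get $G_w(0,t_n)[w_n/\|w_n\|]\to 0$; it then exploits that $G_w(0,t^*)$ is a compact perturbation of the identity to extract a subsequence of $w_n/\|w_n\|$ converging to a nontrivial kernel element, i.e.\ to degeneracy. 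You instead start from a non-degenerate point, note that $F'(x_0,\lambda_0)$ is Fredholm of index $1$ (using that the derivative of a compact map is a compact linear operator), so a one-dimensional kernel forces surjectivity and hence invertibility of the restriction to a complement $Y$ of the tangent line; operator-norm continuity of $F'$ along $\mathcal T$ plus a Neumann series gives a uniformly bounded inverse for nearby parameters, and the same Taylor expansion from (A2) yields $\|(y_n,\mu_n)\|\le C\cdot o(\|(y_n,\mu_n)\|)$, a contradiction. Both proofs use the tubular decomposition and the locally uniform differentiability in exactly the same place and the same way; the trade-off is that your route requires the additional Fredholm-index observation (and surjectivity of the linearization), whereas the paper's avoids any mention of surjectivity at the cost of a compactness/subsequence-extraction step. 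Either argument is complete and valid.
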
 
  \begin{proof}
    By (A2), it suffices to show that every bifurcation point with respect to $\mathcal T$ is degenerate.
    In the notation from above let $(\bar x(t^*),\bar\lambda(t^*))\in\mathcal T$ be such a bifurcation point
    and choose the subspace $W$ such that $X\times\R= \spa\{\psi\}\oplus W$, 
    where $\psi:=(\bar x'(t^*),\bar \lambda'(t^*))$. So there are $C^1$-functions $\bar w$ and $\bar \mu$ with range in $W$ and $\R$,
    respectively, such that $(\bar x(t),\bar \lambda(t))= \bar w(t)+\bar \mu(t)\psi$ for $t$ close to $t^*$.
    By construction of $\psi$ we then have $\bar\mu'(t^*)=1$. Since  $(\bar x(t^*),\bar\lambda(t^*))\in\mathcal
    T$ is a bifurcation point, this implies that there is a sequence $(t_n)$ converging to $t^*$ and
    $w_n\in W\sm\{0\}$ such that the nontrivial solutions $\bar w(t_n)+w_n+\bar\mu(t_n)\psi$ converge to 
    $\bar w(t^*)+\bar\mu(t^*)\psi$ as $n\to\infty$. So the function $G:W\times\R\to X, (w,t)\mapsto F(\bar
    w(t)+w+\bar\mu(t)\psi)$ satisfies $G(0,t)=0$ for $t$ close to $t^*$ as well as
    $$
      0 
      = G(w_n,t_n)
      = G(w_n,t_n)-G(0,t_n)
      = G_w(0,t_n)[w_n] + o(\|w_n\|) 
    $$
    by the uniform differentiability of $F$ along $\mathcal T$. From this we get
    $G_w(0,t_n)[w_n/\|w_n\|]\to 0$ and hence $G_w(0,t^*)[w_n/\|w_n\|]\to 0$ by continuity of $t\mapsto
    G_w(0,t)$ at $t^*$.
    Exploiting that $G_w(0,t^*)$ is a compact perturbation of the identity, we find that a
    subsequence of $(w_n/\|w_n\|)$ converges to some nontrivial $\xi\in W$ in the kernel of 
    $F'(\bar x(t^*),\bar\lambda(t^*))$, hence $(\bar x(t^*),\bar\lambda(t^*))$ is degenerate.  
  \end{proof}
  
  Notice that the statement of Proposition~\ref{prop:unif_diff} need not be true if only $F'\in C(\mathcal
  T,X)$ is assumed as in Kielh\"ofer's
  version of Rabinowitz' Global Bifurcation Theorem from Theorem II.3.3~\cite{Kielhoefer}. This fact will be proved in
  Appendix~B, see Lemma~\ref{lem:counterexample}.
   
  \medskip
    
  Both conditions (A1),(A2) are satisfied in the prototypical situation $F(x,\lambda)=x-\lambda
  Lx - H(x,\lambda)$ described above with the trivial solution family $\mathcal
  T=\{(0,\lambda):\lambda\in\R\}$. Usually, the study of bifurcations from non-standard trivial solution
  families $\mathcal T$, say $\mathcal T=\{(\bar x(t),\bar\lambda(t)):t\in\R\}$, is reduced to the case
  $\mathcal T=\{(0,\lambda):\lambda\in\R\}$ by considering the map $\tilde F(y,t):=F(\bar x(t)+y,\bar
  \lambda(t))$.
  Let us explain why we do not take this approach. Firstly, if $\bar\lambda(t)$ remains bounded as $t\to -\infty$ or $t\to
  \infty$, solutions of $F$ with parameter values outside $\{\bar\lambda(t):t\in\R\}$ cannot be described 
  by any result for the function $\tilde F$. Secondly, unbounded sequences of zeros of $\tilde F$
  need not correspond to unbounded sequences of zeros of $F$. Therefore, it is not possible to derive
  Rabinowitz' alternative from the corresponding result for $\tilde F$. Thirdly, if $\bar\lambda$ is not
  monotone, i.e. if $\mathcal T$ has turning points, then global continua of zeros of $\tilde F$ with respect
  to $(y,t)$ may be much more complicated than the ones for $F$ with respect to $(x,\lambda)$. One simple
  example for this is illustrated in Figure~\ref{fig:easy_example}. One finds that turning points of
  $\mathcal T$ become (artificial) bifurcation points with respect to the $(y,t)$-variables, which makes it rather complicated
  to establish Dancer's result about the jumps of the Leray-Schauder indices in this setting, especially when
  the number of bifurcation points is large.
  Finally, let us mention that bifurcations from such non--standard trivial solution families naturally appear
  in applications, see e.g. Section~\ref{sec:Applications} or~\cite{BaTiWa:Bifurcations,BaDaWa:bifurcations}
  for an application to a nonlinear elliptic Schr\"odinger system.
  %My thesis: $n=1$ and all $q>1$ Satz 2.2.6,2.2.8,2.2.9. 
  %Not a smoothness condition: $(x,\lambda)\mapsto \lambda^m\sin(1/\lambda)x,m\in\N$ is a compact perturbation
  % of the identity and $0\in\Xi$ is an accumulation point of $\Xi$. \\

  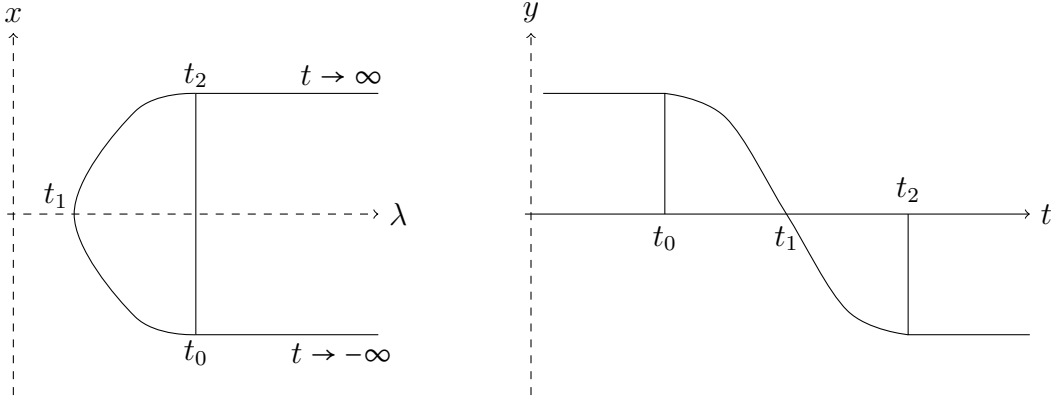
\begin{figure}[h!] 
    \centering
     \subfigure %[$F(x,\lambda)=0$ with solution curve $(\bar x(t),\bar\lambda(t))$] 
	{
    \begin{tikzpicture}[domain=-2:5, yscale=0.8, xscale=0.8]
	  \draw[->,dashed] (-3.1,0) -- (3,0) node[right] {$\lambda$};
	  \draw[->,dashed] (-3,-3) -- (-3,3) node[above]{$x$};
	  \draw[thin] (0,2)  -- (3,2);
	  \draw[thin] (0,-2) -- (3,-2);
	  \draw[thin] (0,-2) -- (0,2);
	  \node at (0,2.3){$t_2$};
	  \node at (2.4,2.3){$t\to\infty$};
	  \node at (0,-2.3){$t_0$};
	  \node at (-2.3,0.3){$t_1$};
	  \node at (2.4,-2.3){$t\to -\infty$};
   	  \draw plot[smooth, tension=0.6] coordinates {(0,2) (-1,1.7) (-2,0) (-1,-1.7) (0,-2)};
 	\end{tikzpicture}
 	}
 	\qquad
 	 \subfigure %[$F(\bar x(t)+z,\bar\lambda(t))=0$] 
	{
	  \begin{tikzpicture}[domain=-0.85:1, yscale=0.8, xscale=0.8]
	  \draw[->] (-4.3,0) -- (4,0) node[right]{$t$};
 	  \draw[->,dashed] (-4.2,-3) -- (-4.2,3) node[above]{$y$};
 	  \node at (-2,-0.4){$t_0$};
	  \node at (0,-0.4){$t_1$};
	  \node at (2, 0.4){$t_2$};
 	  \draw[thin]  (-4,2) -- (-2,2) -- (-2,0);
 	  \draw[thin]  (4,-2) -- (2,-2) -- (2,0);
 	  \draw plot[smooth, tension=0.6] coordinates {(-2,2) (-1,1.6) (0,0) (1,-1.6) (2,-2)};
 	\end{tikzpicture}
	}
 	\caption{Bifurcation diagrams for $F$ and $\tilde F$, respectively}
 	\label{fig:easy_example}    
  \end{figure}

  \medskip
  
  For the statement of Rabinowitz' and Dancer's results under the relaxed assumptions (A1),(A2)
  we need  
  \begin{equation*} %\label{eq:def_S_Sigma}
    \Sigma:=\{(x,\lambda)\in X\times\R: F(x,\lambda)=0\},\qquad
    \mathcal S:= \ov{\Sigma \sm \mathcal T}.
  \end{equation*}
  The index jump along the trivial solution family $\mathcal T$ in direction $\xi\in X'$ 
  at a bifurcation point $(x_0,\lambda_0)\in\mathcal T$ is defined by the formula 
  \begin{align}  \label{eq:formula1_deltastar}
    \begin{aligned}
    \delta^*(x_0,\lambda_0;\xi) 
    &:= \lim_{\mathcal T\ni (x,\lambda)\to (x_0,\lambda_0),  \atop 
    \lambda-\skp{\xi}{x-x_0}_{X'}>\lambda_0}
    \ind\big(F_x(x,\lambda)+F_\lambda(x,\lambda)\xi,0\big) \\ 
    &\quad -  \lim_{\mathcal T \ni (x,\lambda)\to (x_0,\lambda_0), \atop 
    \lambda-\skp{\xi}{x-x_0}_{X'}<\lambda_0} \ind\big(F_x(x,\lambda)+F_\lambda(x,\lambda)\xi,0\big).
    \end{aligned}
  \end{align}
  whenever these limits exist, i.e. whenever the involved Leray-Schauder indices are well-defined and
  eventually constant. Here, $\skp{\cdot}{\cdot}_{X'}$ denotes the dual pairing and
  $\ind(I-L,0)\in\{-1,+1\}$ is the Leray-Schauder index of  $I-L$ whenever $L$ is a compact linear operator
  with $1\notin \sigma(L)$. %This index being integer-valued the above limit 
  In the classical setting $F(x,\lambda)=x-\lambda Lx-H(x,\lambda)$ and $\mathcal
  T=\{(0,\lambda):\lambda\in \R\}$ the number $\delta^*(0,\lambda_0;0)$ is well-defined 
  and equals $\sign(\lambda_0)n_L(\lambda_0)$ from Theorem~1 in \cite{Dancer_On_the_structure}. If,
  however, the bifurcation point $(x_0,\lambda_0)\in\mathcal T$ is also a turning point of $\mathcal T$, then
  $\xi=0$ is not admissible, since it is impossible to find solutions on $\mathcal T$ 
  on both sides of $\lambda_0$. So $\delta^*(x_0,\lambda_0;0)$ is not well-defined in this case. Instead of
  adding the unnatural assumption that bifurcation from turning points of $\mathcal T$ does not occur, we
  will therefore consider $\delta^*(x_0,\lambda;\xi)$ also for $\xi\neq 0$.
  Notice that the case $\xi\neq 0$ may be reduced to the case $\xi=0$ by a simple linear change of
  coordinates, see~\eqref{eq:change_coordinates}.

  \medskip
  
  In order to have $\delta^*(x_0,\lambda_0;\xi)$ well-defined, the direction $\xi\in X'$ has
  to be chosen in dependence of the trivial solution family $\mathcal T$. We say that $\xi\in X'$ is
  transverse to a subset of $\mathcal T$ if for each of its elements $(x_0,\lambda_0)$  a local
  $C^1-$parametrization $(\bar x,\bar\lambda)$ of $\mathcal T$ satisfies $(\bar
  x(t_0),\bar\lambda(t_0))=(x_0,\lambda_0)$ with $\bar\lambda'(t_0)-\skp{\xi}{\bar x'(t_0)}_{X'}\neq 0$. 
  In this case \eqref{eq:formula1_deltastar} gives the formula
  \begin{align} \label{eq:formula2_deltastar}
    \begin{aligned}
    \delta^*(x_0,\lambda_0;\xi) 
    &= \sign\big(\bar\lambda'(t_0)-\skp{\xi}{\bar
      x'(t_0)}_{X'} \big) \cdot  \\ 
    &\quad \Big[\; \lim_{t\to t_0^+}\ind\big(F_x(\bar x(t),\bar \lambda(t))+F_\lambda(\bar x(t),\bar
    \lambda(t))\xi,0\big)  \\
    &\quad -  \lim_{t\to t_0^-} \ind\big(F_x(\bar x(t),\bar \lambda(t))+F_\lambda(\bar x(t),\bar
    \lambda(t))\xi,0\big) \Big],
    \end{aligned}
  \end{align}
  which is useful in applications as we will see in Section~\ref{sec:Applications}. Notice that 
  transverse directions $\xi$ to any given finite subset of $\mathcal T$ always exist, which is a consequence
  of the Hahn-Banach Theorem.
  %By the Leray-Schauder index formula we have $\delta^*(x_0,\lambda_0;\xi)\in \{-2,+2\}$ if $0$ is an
  %eigenvalue of odd multiplicity of the linear operator $F_x(x_0,\lambda_0)+F_\lambda(x_0,\lambda_0)\xi$
  %where $(x_0,\lambda_0)\in \mathcal T$. 
  In the following theorem we summarize Rabinowitz' and Dancer's achievements in this general setting and we
  refer to Appendix~A for a proof. 
   
  \begin{thm}[Rabinowitz, Dancer] \label{thm:DanRab}
    Assume (A1),(A2) and $(x_0,\lambda_0)\in\mathcal S\cap \mathcal T$. Then the connected component $\mathcal
    C$ of $(x_0,\lambda_0)$ in $\mathcal S$ is either unbounded or it is bounded and satisfies  
    \begin{equation} \label{eq:degree_balance_RabDan}
      \sum_{(x,\lambda)\in\mathcal C\cap\mathcal T} \delta^*(x,\lambda;\xi)
      %= \sum_{i=0}^k \delta(\lambda_i) 
      = 0
    \end{equation}
    whenever $\xi\in X'$ is transverse to each point in $\mathcal C\cap\mathcal T$.
  \end{thm}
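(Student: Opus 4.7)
The plan is to adapt Rabinowitz' original global bifurcation argument together with Dancer's index-balance refinement to the generalized setting (A1)--(A2), where the trivial family $\mathcal{T}$ is a curved $C^1$-submanifold rather than a straight line. The first step is a reduction from a general transverse direction $\xi \in X'$ to the case $\xi = 0$: introducing the new parameter $\mu := \lambda - \skp{\xi}{x}_{X'}$ and setting $\tilde F(x, \mu) := F(x, \mu + \skp{\xi}{x}_{X'})$, the map $(x, \lambda) \mapsto (x, \mu)$ is a homeomorphism of $X \times \R$ that sends $\mathcal{T}$ onto a new closed embedded $C^1$-submanifold $\tilde{\mathcal{T}}$ for which (A1) and (A2) persist. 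Transversality of $\xi$ to $\mathcal{T}$ at a point in $\mathcal{C} \cap \mathcal{T}$ becomes the condition that $\tilde{\mathcal{T}}$ projects locally diffeomorphically onto the $\mu$-axis there, i.e.\ that the zero functional is transverse to $\tilde{\mathcal{T}}$. Since this linear change of coordinates preserves connectedness, boundedness, and Leray-Schauder indices, it suffices to prove the theorem under the additional assumption $\xi = 0$.

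Assuming henceforth that $\xi = 0$, I would follow the classical Rabinowitz-Dancer scheme. If $\mathcal{C}$ is unbounded, there is nothing to prove; otherwise, (A1) together with the closedness of $\mathcal{S}$ gives compactness of $\mathcal{C}$, and a Whyburn-type separation lemma produces an open bounded neighborhood $\mathcal{O}$ of $\mathcal{C}$ with $\partial \mathcal{O} \cap \mathcal{S} = \emptyset$. Proposition~\ref{prop:unif_diff} ensures that the bifurcation points in $\mathcal{C} \cap \mathcal{T}$ form a discrete, hence finite, set $(x_1, \lambda_1), \ldots, (x_k, \lambda_k)$. Exploiting the local $C^1$-structure of $\mathcal{T}$ and the (now automatic) transversality, one shrinks $\mathcal{O}$ so that in a neighborhood $U_j$ of each $(x_j, \lambda_j)$ the curve $\mathcal{T}$ is a graph $\{(\gamma_j(\lambda), \lambda) : |\lambda - \lambda_j| < \eps_j\}$; further, any points of $\mathcal{T} \cap \mathcal{O}$ outside $\bigcup_j U_j$ are non-bifurcation points that can be excised from $\mathcal{O}$ by additional small tubes, yielding a modified open set whose $\lambda$-slices meet $\mathcal{T}$ only near the $\lambda_j$.

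The core of the argument is then a slice degree computation. Define $d(\lambda) := \deg(F(\cdot, \lambda), \mathcal{O}_\lambda, 0)$ whenever the right-hand side is well-defined; this is the case for every $\lambda \notin \{\lambda_1, \ldots, \lambda_k\}$, and $d(\lambda) \equiv 0$ for $\lambda$ outside a bounded interval. Homotopy invariance shows that $d$ is locally constant on its domain of definition, while the jump of $d$ across each $\lambda_j$ is computed locally: after the substitution $y := x - \gamma_j(\lambda)$, which straightens $\mathcal{T}$ to the standard trivial line $\{0\} \times \R$ in a neighborhood of $(x_j, \lambda_j)$ and preserves the compact-perturbation-of-identity structure, one reads off the jump directly from the definition of $\delta^*$, obtaining exactly $\delta^*(x_j, \lambda_j; 0)$. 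Summing over $j$ and using that $d$ vanishes for extreme $\lambda$ gives $\sum_j \delta^*(x_j, \lambda_j; 0) = 0$, which under the inverse coordinate change returns the identity $\sum_{(x, \lambda) \in \mathcal{C} \cap \mathcal{T}} \delta^*(x, \lambda; \xi) = 0$.

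The principal obstacle I expect is making the Whyburn neighborhood $\mathcal{O}$ compatible both with the topology of $\mathcal{C}$ and with the geometry of $\mathcal{T}$: one has to remove from $\mathcal{O}$ those pieces of $\mathcal{T}$ that enter its interior without accompanying bifurcation, while preserving $\mathcal{C}$ as well as the asymptotic vanishing of $d(\lambda)$ at $\pm \infty$. The matching of the local degree jump with the abstract definition of $\delta^*$ in~\eqref{eq:formula1_deltastar} uses crucially the locally uniform differentiability of $F$ along $\mathcal{T}$ built into (A2); this is what allows the straightening $y = x - \gamma_j(\lambda)$ to preserve the continuity of the Fr\'echet derivative at the trivial solution, which is needed for the Leray-Schauder indices to be defined and to behave well under the homotopies implicit in the slice degree calculation.
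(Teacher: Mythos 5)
Your overall strategy coincides with the paper's: reduce to $\xi=0$ via the substitution $\tilde F(x,\mu)=F(x,\mu+\skp{\xi}{x}_{X'})$, use Whyburn's lemma to build a bounded open neighbourhood $\mathcal O$ of $\mathcal C$ with $\partial\mathcal O\cap\mathcal S=\emptyset$, and telescope slice degrees between consecutive bifurcation values. But the core bookkeeping step is wrong as stated. If $d(\lambda):=\deg(F(\cdot,\lambda),\mathcal O_\lambda,0)$ is the full slice degree, then the generalized homotopy invariance (Lemma~\ref{lem:hom_inv}) makes $d$ constant on all of $\R$ --- including across each $\lambda_j$ --- precisely because $\partial\mathcal O$ carries no zeros; so $d$ has no jumps, and the identity $d\equiv 0$ by itself carries no information about the $\delta^*$'s. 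The quantity that jumps by $\delta^*(x_j,\lambda_j;0)$ across $\lambda_j$ is not $d$ but the contribution $\mu(\lambda)$ of the trivial solutions, i.e.\ the degree on a shrinking tube $B_r(\mathcal T)_\lambda$; the complementary contribution $\nu(\lambda)$ on $(\mathcal O\sm\ov{B_r(\mathcal T)})_\lambda$ then jumps by $-\delta^*$, is constant between consecutive bifurcation values, and vanishes at the extremes, and it is the telescoping of $\nu$ --- not of $d$ --- that yields \eqref{eq:degree_balance_RabDan}. Your attempt to avoid this decomposition by excising the non-bifurcating part of $\mathcal T\cap\mathcal O$ with small tubes does not repair the argument: the trivial branch must pass from the retained neighbourhoods $U_j$ into the excised tubes, so the boundary of the modified domain contains zeros of $F$ at the interface values $\lambda_j\pm\eps_j$; there the slice degree is undefined and homotopy invariance cannot be applied across those values. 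Either way you are forced back to the two-term decomposition $d=\mu+\nu$ with the limit $r\to 0^+$, which is the actual content of the identities (a)--(c) in the paper's proof.

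A secondary, fixable omission: you index the points of $\mathcal C\cap\mathcal T$ as $(x_j,\lambda_j)$ with implicitly distinct parameter values, whereas several bifurcation points may share the same $\lambda_i$ (hence the paper's double index $(x_{ij},\lambda_i)$); the jump of $\mu$ across $\lambda_i$ is then the sum $\sum_{j}\delta^*(x_{ij},\lambda_i;0)$, which is what the Leray--Schauder index formula delivers. One must also arrange, as in the paper's condition (ii), that $\ov{\mathcal O}$ meets no bifurcation points of $\mathcal T$ other than those in $\mathcal C\cap\mathcal T$, and that after the change of coordinates no turning points of $\mathcal T$ occur at the $\lambda_i$, so that the slices $(\mathcal O\cap\mathcal T)_{\lambda_i\pm\eps}$ consist of exactly the expected finite number of points converging to the $x_{ij}$.
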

  
  The condition~\eqref{eq:degree_balance_RabDan} 
  implies that there is an even number of points $(x,\lambda)\in\mathcal C\cap\mathcal T$ where 
  the critical eigenvalue of the operator $F_x(x,\lambda)+F_\lambda(x,\lambda)\xi$ 
  has odd algebraic multiplicity and crosses zero. 
  Notice that in Rabinowitz' and Dancer's original version for the special case
  $F(x,\lambda)=x-\lambda Lx - H(x,\lambda)$, the point $(x_0,\lambda_0)=(0,\lambda_0)$ is chosen in such a
  way that $1/\lambda_0$ is an eigenvalue of odd algebraic multiplicity of $L$ so that the above observation
  proves the existence of a second element of $\mathcal C\cap\mathcal T$. In this way, 
  \eqref{eq:degree_balance_RabDan} implies Rabinowitz' alternative. In the
  following, a continuum $\mathcal C$ as in Theorem~\ref{thm:DanRab} will be called a ''Rabinowitz continuum''
  of a given point $(x_0,\lambda_0)\in\mathcal T$.
  
  \section{Symmetry breaking and secondary bifurcations} \label{sec:symbreak}
  
  From now on we assume that there is a closed nontrivial subspace $Y\subsetneq X$ such that
  $F(Y\times\R)\subset Y$ for $F$ as in (A1). Moreover, the trivial solution family $\mathcal{T}$ will be
  assumed to belong to both spaces.   
  This makes sure that Theorem~\ref{thm:DanRab} is applicable both in $X\times\R$ and in $Y\times\R$. 
  Given that the relevant quantities introduced above in general depend on the ambient Banach space, we will
  put a corresponding index. For instance $\mathcal C_X,\mathcal C_Y$ will denote Rabinowitz continua in the
  spaces $X,Y$, respectively, similar for $\delta_X^*,\delta_Y^*,\Sigma_X,\Sigma_Y$ etc. We will actually see
  that the discrepancy between $\delta_X^*$ and $\delta_Y^*$  is responsible for symmetry
  breaking via global secondary bifurcation. Here the word symmetry breaking is justified since reasonable choices in
  applications are given by $Y=\{ x\in X: gx=x \,\text{for all }g\in G\}$ for some group $G$ acting linearly
  and continuously on $X$. For instance, in Section~\ref{sec:Applications} we will consider an ODE boundary
  value problem formulated in the spaces $X=H^1_{per}(2\pi/p;\C)$ and $Y=H^1_{per}(2\pi/q;\C)$ for $q=lp$ with $l\in\N$.
  Notice that this subspace $Y$ can indeed be rewritten as the fixed point space of the nontrivial group
  action $(\ov m x)(t)=x(t+2\pi m/q)$ for $x\in X$ and $\bar m\in G:=\Z/l\Z=\{\ov 0,\ldots,\ov{l-1}\}$.
  
  \medskip
  
  We will say that a continuum (i.e. a closed connected subset of $X\times\R$) $\mathcal{C}\subset \Sigma_X$
  bifurcates from $\mathcal{T}$ at the point $(x_0,\lambda_0)\in\mathcal T$ if $(x_0,\lambda_0)\in
  \ov{\mathcal{C}\sm\mathcal{T}}$.
  %In our considerations we will choose $\mathcal C$ to be the Rabinowitz continuum $\mathcal C_Y$ in the
  %space $Y$ emanating from the bifurcation point $(0,\lambda_0)$. 
  In such a situation we say that (local) secondary bifurcation occurs with respect to
  $(\mathcal{T},\mathcal{C})$ if there is a solution $(x,\lambda)\in \mathcal{C}\sm \mathcal{T}$ and a sequence of solutions
  $(x_n,\lambda_n)\in \Sigma_X\sm \mathcal{C}$ such that $(x_n,\lambda_n)\to (x,\lambda)$
  as $n\to\infty$. We then say that $(X,Y)$-symmetry breaking occurs at $(x,\lambda)$ if we can ensure  
    $\mathcal C\subset \Sigma_Y$ and $(x_n,\lambda_n)\in \Sigma_X \sm \Sigma_Y$, so  
  the $x_n$ are less symmetric than $x$. The secondary bifurcation will be called global if
  the connected component of $(x,\lambda)$ in $\ov{\Sigma_X\sm (\mathcal C\cup\mathcal T)}$ is unbounded or
  returns to the trivial family $\mathcal T$ at some other point on the trivial line, i.e., at some element of
  $\mathcal T\sm \mathcal C$. Another reasonable notion of global secondary bifurcation could require this
  connected component to be unbounded or to return to the larger set $\mathcal C\cup\mathcal T$ at another
  point. Our preference for the former definition is exclusively motivated by the fact that our main result
  from Theorem~\ref{thm_symbreak} allows to observe the former (more special) phenomenon.
  
  \medskip
  
  Local secondary bifurcation from $(\mathcal T,\mathcal C)$ is nothing but local
  bifurcation from $\mathcal C\sm\mathcal T$, so it is not a new concept from a theoretical point of
  view. Practically, however, this difference is huge, since $\mathcal C$ is rarely explicitly known so that
  standard bifurcation theorems are not applicable. In particular, the well-known tools for proving
  local bifurcations such as the Crandall-Rabinowitz Theorem \cite{CR_bif_simple} or the Marino-B\"ohme
  Theorem on variational bifurcation \cite{Boehme,Marino} are useless for studying such bifurcations. Degree
  theory as used in Theorem~\ref{thm:DanRab}, however, allows for global considerations and turns out to be
  useful. The following lemma shows how this theorem may be employed to prove global secondary bifurcation on
  an abstract level.
  
  \begin{lem} \label{lem:bifurcation}
    Let $X$ be a real Banach space, $Y\subset X$ a closed subspace and assume (A1),(A2) as well as
    $F(Y\times\R)\subset Y$.
    Suppose that the Rabinowitz continuum $\mathcal C_Y$ emanating from $(x_0,\lambda_0)\in\mathcal T\subset
    Y\times\R$ is non-empty and bounded in $Y\times\R$ and satisfies
    \begin{itemize}
      \item[(a)] $\sum_{(x,\lambda)\in \mathcal{C}_Y\cap\mathcal{T}} \delta_X^*(x,\lambda;\xi) \neq 0$,
      \item[(b)] $\mathcal C_X\cap U = \mathcal C_Y\cap U$ for some open neighbourhood $U\subset X\times\R$ of
      $\mathcal{C}_Y\cap\mathcal{T}$
    \end{itemize}
    for some direction $\xi\in X'$ that is transverse to $\mathcal C_X\cap \mathcal T$.
    Then the following alternative holds for the Rabinowitz continuum $\mathcal C_X$ emanating from
    $(x_0,\lambda_0)$:
    \begin{itemize}
      \item[(i)] $\mathcal{C}_X$ is unbounded or 
      \item[(ii)] $\mathcal{C}_X\cap\mathcal{T} \supsetneq \mathcal{C}_Y\cap\mathcal{T}$ such that
      %\begin{equation}\label{eq:degree_balance}
         $\sum_{(x,\lambda)\in \mathcal{C}_X\cap\mathcal{T}} \delta_X^*(x,\lambda;\xi) = 0$
         for $\xi\in X'$ transverse to $\mathcal C_X\cap \mathcal T$.
      %\end{equation}
    \end{itemize}
    In both cases global secondary bifurcation occurs from $(\mathcal T,\mathcal C_Y)$ through $\mathcal C_X$
    and $(X,Y)$-symmetry-breaking occurs at all points of $\ov{\mathcal C_X\sm\mathcal C_Y}\cap (\mathcal
    C_Y\sm\mathcal T)\neq \emptyset$.
  \end{lem}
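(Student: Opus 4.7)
The plan is to apply the Rabinowitz--Dancer alternative (Theorem~\ref{thm:DanRab}) to the continuum $\mathcal{C}_X$ through $(x_0,\lambda_0)$ in the ambient space $X$ and to exploit hypotheses (a),(b) in order to force a richer trivial-intersection pattern for $\mathcal{C}_X$ than for $\mathcal{C}_Y$. Note first that since $Y$ is closed in $X$, $F(Y\times\R)\subset Y$, and $\mathcal{T}\subset Y\times\R$, assumptions (A1),(A2) are inherited by the restriction to $Y\times\R$, so Theorem~\ref{thm:DanRab} is applicable in both spaces. The closedness of $Y$ in $X$ also gives $\mathcal{S}_Y\subset \mathcal{S}_X$, so $\mathcal{C}_Y$ is a connected subset of $\mathcal{S}_X$ containing $(x_0,\lambda_0)$, and hence $\mathcal{C}_Y\subset \mathcal{C}_X$. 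Equality is ruled out by (a): it would force $\mathcal{C}_X$ to be bounded, whence Theorem~\ref{thm:DanRab} applied in $X$ would yield $\sum_{\mathcal{C}_X\cap\mathcal T}\delta^*_X(\cdot;\xi)=0$, contradicting (a) via $\mathcal{C}_X\cap\mathcal T=\mathcal{C}_Y\cap\mathcal T$.

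With $\mathcal{C}_Y\subsetneq \mathcal{C}_X$ in hand, Theorem~\ref{thm:DanRab} applied to $\mathcal{C}_X$ gives the alternative directly: either $\mathcal{C}_X$ is unbounded, which is (i), or it is bounded with $\sum_{\mathcal{C}_X\cap\mathcal T}\delta^*_X(\cdot;\xi)=0$, in which case (a) forces $\mathcal{C}_X\cap\mathcal T\supsetneq \mathcal{C}_Y\cap\mathcal T$, which is (ii). For the secondary bifurcation claim, observe that since $\mathcal{C}_X$ is connected and $\mathcal{C}_Y$ is a proper closed subset, $\ov{\mathcal{C}_X\sm \mathcal{C}_Y}$ must meet $\mathcal{C}_Y$, for otherwise $\mathcal{C}_Y$ and $\ov{\mathcal{C}_X\sm \mathcal{C}_Y}$ would form a separation of $\mathcal{C}_X$. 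Hypothesis (b) excludes meeting points on $\mathcal{C}_Y\cap\mathcal T$, since there $\mathcal{C}_X$ locally coincides with $\mathcal{C}_Y$; consequently $\ov{\mathcal{C}_X\sm \mathcal{C}_Y}\cap(\mathcal{C}_Y\sm\mathcal T)\neq \emptyset$, and every point in this intersection is a local secondary bifurcation point. Globality is read off from the two cases: in (i) the unboundedness of $\mathcal{C}_X$ passes to the component of the bifurcation point in $\ov{\Sigma_X\sm(\mathcal{C}_Y\cup\mathcal T)}$, while in (ii) the extra crossings $\mathcal{C}_X\cap\mathcal T\sm \mathcal{C}_Y$ provide returns of that component to $\mathcal T\sm\mathcal{C}_Y$.

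The hardest step is the actual symmetry-breaking statement: at every $(x,\lambda)\in \ov{\mathcal{C}_X\sm \mathcal{C}_Y}\cap(\mathcal{C}_Y\sm\mathcal T)$ one must produce an approaching sequence in $\Sigma_X\sm \Sigma_Y$, not merely in $\Sigma_X\sm \mathcal{C}_Y$. I would argue by contradiction: if in some small ball around $(x,\lambda)$ all points of $\mathcal{C}_X\sm \mathcal{C}_Y$ lay in $\Sigma_Y$, they would, using $(x,\lambda)\notin\mathcal T$ together with the closedness of $\mathcal T$, in fact lie in $\mathcal{S}_Y$. The aim is then to show, via the local topological structure of $\mathcal{S}_Y$ around a non-trivial, non-$\mathcal T$ solution, that such points must belong to the connected component $\mathcal{C}_Y$ of $(x,\lambda)$ in $\mathcal{S}_Y$, contradicting their membership in $\mathcal{C}_X\sm\mathcal{C}_Y$. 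Making this precise, i.e., ruling out an accumulation of many small distinct components of $\mathcal{S}_Y$ at $(x,\lambda)$, is the principal conceptual difficulty of the proof.
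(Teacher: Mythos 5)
Your first two paragraphs reproduce the paper's argument faithfully: $\mathcal C_Y\subset\mathcal C_X$ since $Y$ is closed in $X$, the alternative (i)/(ii) follows from Theorem~\ref{thm:DanRab} applied in $X$ together with (a), and a point of $\ov{\mathcal C_X\sm\mathcal C_Y}\cap(\mathcal C_Y\sm\mathcal T)$ is produced from the decomposition $\mathcal C_X=\ov{\mathcal C_X\sm\mathcal C_Y}\cup\mathcal C_Y$ into two nonempty closed subsets of the connected set $\mathcal C_X$, with (b) excluding meeting points near $\mathcal C_Y\cap\mathcal T$. Up to there the proposal is correct and follows the same route as the paper.

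The genuine gap is exactly where you locate it, and your text does not close it: the symmetry-breaking claim is left as an unproved ``aim''. Moreover, the route you sketch (classifying the local structure of $\mathcal S_Y$ near $(x,\lambda)$ and excluding an accumulation of small components) is not what the paper does and would indeed be hard to carry out. The paper's argument is a short contradiction via maximality: if in some neighbourhood $V$ of $(x,\lambda)$, chosen disjoint from the closed set $\mathcal T$, every element of $\mathcal C_X\sm\mathcal C_Y$ belonged to $Y\times\R$, then these elements would lie in $\Sigma_Y\sm\mathcal T\subset\mathcal S_Y$, and $\mathcal C_Y$ would then fail to be a \emph{maximal} connected subset of $\mathcal S_Y$ --- the connectedness of $\mathcal C_X$, not the local topology of $\mathcal S_Y$, is what forces these points into the component of $(x,\lambda)$. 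So the missing idea is simply to invoke the defining maximality of the Rabinowitz continuum $\mathcal C_Y$ in $\mathcal S_Y$. A second, smaller omission: for globality you merely assert that unboundedness, respectively the extra trivial crossings, ``pass to'' the component of the secondary bifurcation point in $\ov{\Sigma_X\sm(\mathcal C_Y\cup\mathcal T)}$. The paper proves this by identifying that component as precisely $\ov{\mathcal C_X\sm(\mathcal C_Y\cup\mathcal T)}$, checking that this set contains $(x,\lambda)$ and is both closed and open in $\ov{\Sigma_X\sm(\mathcal C_Y\cup\mathcal T)}$, the openness being inherited from the openness of $\mathcal C_X$ in $\ov{\Sigma_X\sm\mathcal T}$. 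Without some such identification the globality statement is not yet established.
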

  \begin{proof} 
    From  $Y\subset X$ %and the maximality of the Rabinowitz continua 
    we get $\mathcal C_Y\subset\mathcal C_X$. If now $\mathcal C_X$ is bounded, then
    Theorem~\ref{thm:DanRab}~(ii) yields 
    $$
        \sum_{(x,\lambda)\in \mathcal{C}_X\cap\mathcal{T}} \delta_X^*(x,\lambda;\xi) = 0,
    $$
    so assumption (a) implies $\mathcal{C}_X\cap\mathcal{T}\supsetneq \mathcal
    C_Y\cap \mathcal T$. This proves the alternative (i) or (ii) from above. Next we use (b) to
    prove global secondary bifurcation from $(\mathcal T,\mathcal C_Y)$ through $\mathcal C_X$. 
    The set $\mathcal C_X$ is, by definition, connected in
    $\ov{\Sigma_X\sm\mathcal T}$ and we have $\mathcal C_X = \ov{\mathcal C_X \sm \mathcal C_Y}\cup
    \mathcal{C}_Y$ where both subsets are nonempty and closed in $\ov{\Sigma_X\sm\mathcal T}$. So
    these two sets have nonempty intersection, i.e., we can find $(x,\lambda)\in\mathcal C_Y$ and
    $(x_n,\lambda_n)\in\mathcal{C}_X\sm \mathcal C_Y$ such that $(x_n,\lambda_n)\to (x,\lambda)$ as
    $n\to\infty$. By assumption (b) the continua $\mathcal C_X,\mathcal C_Y$ coincide in a neighbourhood of
    $\mathcal{C}_Y\cap \mathcal{T}$ proving
    $(x,\lambda)\in\mathcal C_Y\sm \mathcal T$, i.e., local secondary bifurcation with respect to
    $(\mathcal T,\mathcal C_Y)$ occurs at $(x,\lambda)$. Even more, for any such
    point $(x,\lambda)\in \ov{\mathcal C_X \sm \mathcal C_Y}\cap \mathcal C_Y\sm \mathcal T$ and any open
    neighbourhood of $(x,\lambda)$ there must be at least one element of $\mathcal C_X\sm\mathcal C_Y$ that
    does not belong to $Y\times\R$, since otherwise $\mathcal C_Y$ would not be maximal. So $(X,Y)$-symmetry
    breaking occurs at $(x,\lambda)$.
    
    \medskip
    
    We finally prove that the secondary bifurcation is global in the sense defined above. In view of the
    validity of the alternative ``(i) or (ii)'' it suffices to show that the connected component of any
    $(x,\lambda) \in\mathcal C_Y\sm\mathcal T$ in $\ov{\Sigma_X \sm (\mathcal C_Y\cup \mathcal T)}$ is precisely
    $\ov{\mathcal C_X\sm (\mathcal C_Y\cup \mathcal T)}$. Indeed, the latter set  contains $(x,\lambda)$ and it is closed in
    $\ov{\Sigma_X \sm (\mathcal C_Y\cup \mathcal T)}$. Additionally, it is open in $\ov{\Sigma_X \sm
    (\mathcal C_Y\cup \mathcal T)}$ since $\mathcal C_X$ is open in $\ov{\Sigma_X\sm \mathcal T}$. 
    This finishes the proof. 
  \end{proof}
  
  In Figure~\ref{fig:bifurcation_prim_sec} we illustrate the situation described by
  Lemma~\ref{lem:bifurcation} schematically. The curve of trivial solutions $\mathcal T$ contains primary
  bifurcation points $P_1,P_2,P_3,P_4$. One possible configuration is that that the primary branch $\mathcal
  C_Y$ consists of the solutions on the curves joining $P_1,S_1,P_4$. At $S_1$ secondary bifurcation
  occurs into $\mathcal C_X\supset \mathcal C_Y$ and $\mathcal C_X$ reenters the trivial solution family at
  $P_2$ and $P_3$.
  
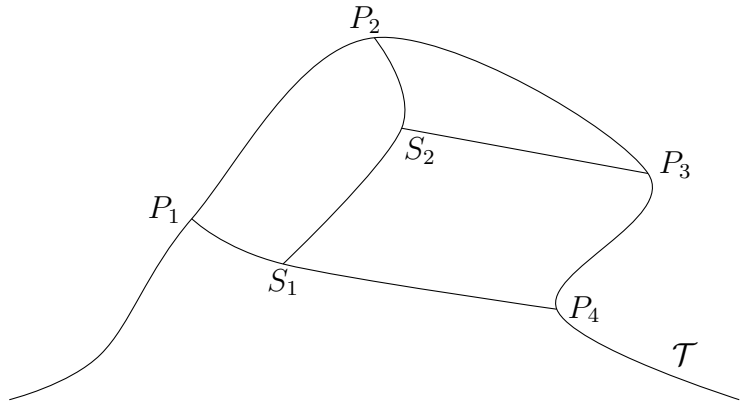
\begin{figure}[h!]
	 %\begin{minipage}{0.8\textwidth}
    \begin{tikzpicture}[domain=0:10, yscale=0.6, xscale=1.2]
	  \node at (7.4,1){$\mathcal T$};
	  \node at (1.7,4.2){$P_1$};
	  \node at (3.9,8.4){$P_2$};
	  \node at (7.3,5.2){$P_3$};
	  \node at (6.3,2){$P_4$};
	  \node at (3,2.6){$S_1$};
	  \node at (4.5,5.5){$S_2$};
   	  \draw plot[smooth, tension=0.6] coordinates {(0,0) (1,1) (2,4) (4,8) (7,5) (6,2) (8,0)};
   	  \draw plot[smooth, tension=0.6] coordinates {(2,4) (3,3) (6,2)};
   	  \draw plot[smooth, tension=0.6] coordinates {(3,3) (4.3,6) (4,8)};
   	  \draw plot[smooth, tension=0.6] coordinates { (4.3,6) (7,5)};
%    	  \node at (9,8){$P_1,P_2,P_3,P_4\in\mathcal T$};
%    	  \node at (9,7){$P_1,S_1,P_4\in\mathcal C_Y$}; 
%  	  \node at (9,6){$P_1,S_1,P_4,P_2,S_2,P_3\in \mathcal C_X$};
%  	  \node at (9,5){$\{S_1\}=\ov{\mathcal C_X\sm \mathcal C_Y}\cap \mathcal C_Y\sm\mathcal T$};
 	\end{tikzpicture}
 	\caption{Primary and secondary bifurcations at $P_1,P_2,P_3,P_4$ resp. $S_1,S_2$}
 	\label{fig:bifurcation_prim_sec}
\end{figure}

  At first sight, Lemma~\ref{lem:bifurcation} may appear to be of limited use due to assumption (b).
  In order to verify it, we make use of the Crandall-Rabinowitz Theorem because it allows to
  charaterize all solutions in the vicinity of the bifurcation point. For the convenience of the reader we
  recall it here.
  
   \begin{thm}[Crandall-Rabinowitz, cf. Theorem~1 \cite{CR_bif_simple}] \label{thm_CR}
     Let $X$ be a real Banach space and assume (A1),(A2) as well as $F\in C^2(X\times\R,X)$, let
     $(x_0,\lambda_0):=(\bar x(t_0),\bar\lambda(t_0))\in\mathcal T$ where  $(\bar x,\bar\lambda)$ is a local $C^1-$parametrization of
     $\mathcal T$. Moreover assume
    \begin{itemize}
      \item[(A3)] $\ker_{X\times\R}(F'(x_0,\lambda_0))=\spa\{(\bar x'(t_0),\bar\lambda'(t_0)),\phi\}$ is
      two-dimensional with $\phi\in Y\times\R$,
      \item[(A4)] $\ran_{X\times\R}(F'(x_0,\lambda_0))$ has codimension one, 
      \item[(A5)] $F''(x_0,\lambda_0)[(\bar x'(t_0),\bar\lambda'(t_0)),\phi]\notin
      \ran_{X\times\R}(F'(x_0,\lambda_0))$.
    \end{itemize}
    Then there exists $\eps>0$ and a continuous curve $(\hat x,\hat \lambda):
    (-\eps,\eps)\to X\times \R$ with $\hat x(0)=x_0,\hat\lambda(0)=\lambda_0$ such that 
    $F(\hat x(s),\hat\lambda(s))=0$ for all $s\in (-\eps,\eps)$ and $(\hat x(s),\hat
    \lambda(s))\notin\mathcal T$ if $0<|s|<\eps$. In a small neighbourhood of $(x_0,\lambda_0)$ in
    $X\times\R$ all solutions not belonging to $\mathcal T$ lie on this curve. 
  \end{thm}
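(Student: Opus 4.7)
The plan is to perform a Lyapunov--Schmidt reduction adapted to the fact that the trivial family $\mathcal T$ is a general $C^1$-curve rather than the $\lambda$-axis. Writing $\psi:=(\bar x'(t_0),\bar\lambda'(t_0))$, hypothesis (A3) reads $\ker F'(x_0,\lambda_0)=\spa\{\psi,\phi\}$. I would choose a closed complement $M$ of this kernel in $X\times\R$ and a functional $e^*\in X'$ vanishing on $\ran F'(x_0,\lambda_0)$, which is possible by (A4). Decomposing $(x,\lambda)=(x_0,\lambda_0)+\alpha\psi+\beta\phi+z$ with $z\in M$, the projection of $F=0$ onto $\ran F'(x_0,\lambda_0)$ can be solved for $z=z(\alpha,\beta)$ by the implicit function theorem, since $F'(x_0,\lambda_0)|_M$ is an isomorphism onto $\ran F'(x_0,\lambda_0)$. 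The remaining scalar equation
\begin{equation*}
  g(\alpha,\beta):=e^*\bigl(F((x_0,\lambda_0)+\alpha\psi+\beta\phi+z(\alpha,\beta))\bigr)=0
\end{equation*}
is of class $C^2$, and a chain-rule computation using $\psi,\phi\in\ker F'(x_0,\lambda_0)$ gives $g(0,0)=g_\alpha(0,0)=g_\beta(0,0)=0$ and, crucially, $g_{\alpha\beta}(0,0)=e^*\bigl(F''(x_0,\lambda_0)[\psi,\phi]\bigr)\neq 0$ by (A5).

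Next I would factor out $\mathcal T$. In the $(\alpha,\beta)$-plane the family $\mathcal T$ is represented by a $C^1$-graph $\beta=b^*(\alpha)$ with $b^*(0)=0$; moreover $b^{*\prime}(0)=0$ because the velocity vector $\psi$ of $\mathcal T$ at $(x_0,\lambda_0)$ points along the $\alpha$-axis by our choice of decomposition. Since $g(\alpha,b^*(\alpha))\equiv 0$, Hadamard's lemma produces the factorization
\begin{equation*}
  g(\alpha,\beta)=(\beta-b^*(\alpha))\,h(\alpha,\beta),\qquad h(\alpha,\beta):=\int_0^1 g_\beta\bigl(\alpha,b^*(\alpha)+u(\beta-b^*(\alpha))\bigr)\,du,
\end{equation*}
and $h$ is $C^1$ because $g_\beta$ is $C^1$ (from $F\in C^2$) and $b^*$ is $C^1$. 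A short computation yields $h(0,0)=g_\beta(0,0)=0$ and $h_\alpha(0,0)=g_{\alpha\beta}(0,0)+\tfrac12 g_{\beta\beta}(0,0)b^{*\prime}(0)=g_{\alpha\beta}(0,0)\neq 0$.

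The implicit function theorem applied to $h$ then produces a $C^1$-curve $\alpha=\alpha^*(\beta)$ with $\alpha^*(0)=0$, and lifting back to $X\times\R$ gives the desired continuous branch
\begin{equation*}
  (\hat x(s),\hat\lambda(s)):=(x_0,\lambda_0)+\alpha^*(s)\psi+s\phi+z(\alpha^*(s),s).
\end{equation*}
Near the origin the zero set of $g$ coincides with the union of the two graphs $\beta=b^*(\alpha)$ (corresponding to $\mathcal T$) and $\alpha=\alpha^*(\beta)$ (the new branch); since $b^{*\prime}(0)=0$ these graphs meet only at the origin, so $(\hat x(s),\hat\lambda(s))\notin\mathcal T$ for $0<|s|<\eps$, and every nearby solution outside $\mathcal T$ must lie on the new branch, which proves the stated exhaustion.

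The main obstacle is essentially bookkeeping around regularity: one must verify that composing and integrating produces a $C^1$-map $h$ from the merely $C^1$-parametrization of $\mathcal T$ together with the $C^2$-regularity of $F$, and that the degeneracy $b^{*\prime}(0)=0$ is an honest consequence of (A2) rather than an extra assumption. Once these points are checked, the argument is a direct adaptation of the classical Lyapunov--Schmidt reduction behind Crandall--Rabinowitz, the only novelty being that the ``trivial'' branch one factors out is the curved manifold $\mathcal T$ instead of $\{(0,\lambda):\lambda\in\R\}$.
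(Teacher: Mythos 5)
Your argument is correct, and it is worth noting that the paper does not actually prove this theorem: it is stated as a citation of Theorem~1 in \cite{CR_bif_simple}, with the remark that the regularity hypotheses can be relaxed as in Satz~A.7 of \cite{Ma_Thesis}. What you have written is precisely the standard Lyapunov--Schmidt proof behind that citation, adapted to the curved trivial family: the reduction $z=z(\alpha,\beta)$ is legitimate because $F'(x_0,\lambda_0)$ is Fredholm of index one with closed range by (A1) and (A4); the computation $g_{\alpha\beta}(0,0)=e^*\bigl(F''(x_0,\lambda_0)[\psi,\phi]\bigr)$ uses $z_\alpha(0,0)=z_\beta(0,0)=0$ and the vanishing of $e^*$ on the range, both standard; and the identity $b^{*\prime}(0)=0$ does follow from (A2), since the tangent vector of $\mathcal T$ at $(x_0,\lambda_0)$ is $\psi$ and hence has vanishing $\phi$-component in the chosen splitting. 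The two points you flag as requiring care are handled correctly: $h$ is $C^1$ because $g_\beta\in C^1$ (from $F\in C^2$ and the $C^2$-dependence of $z$ on $(\alpha,\beta)$) and $b^*\in C^1$, and the two solution curves $\{\beta=b^*(\alpha)\}$ and $\{\alpha=\alpha^*(\beta)\}$ meet only at the origin because their tangents at the origin, $(1,0)$ and $(\alpha^{*\prime}(0),1)$, are linearly independent. So your proposal supplies a complete proof where the paper offers only a reference; the only difference from the classical Crandall--Rabinowitz argument is, as you say, that the factor removed by Hadamard's lemma is $\beta-b^*(\alpha)$ rather than $\beta$ itself.
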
 
  
  We remark that the regularity assumptions on $F$ may be slightly relaxed for this result to remain true. In
  fact, twice continuous differentiablity on $X\times\R$ may be replaced by once continuous differentiablity
  in a neighbourhood of $\mathcal T$ with uniform twice continuous differentiablity along $\mathcal T$, see
  Satz~A.7 and in particular assumption (V) in~\cite{Ma_Thesis}, pp. 119--126. Combining the
  Crandall-Rabinowitz Theorem with Lemma~\ref{lem:bifurcation} we obtain our main result.
   
  \begin{thm}\label{thm_symbreak}
    Let $X$ be a real Banach space, $Y\subset X$ a closed subspace and assume (A1),(A2) as well as $F\in
    C^2(X\times\R,X)$ with $F(Y\times\R)\subset Y$. Moreover suppose that the Rabinowitz continuum
    $\mathcal{C}_Y$ emanating from $(x_0,\lambda_0)\in\mathcal T\subset Y\times\R$ is non-empty, bounded and
    that (A3),(A4),(A5) are satisfied at each element of $\mathcal C_Y\cap\mathcal T$.
    Furthermore, for $\xi\in X'$ transverse to $\mathcal C_X\cap\mathcal T$ we assume 
    \begin{equation}\label{eq:sym_break_condition} 
      \sum_{(x,\lambda)\in \mathcal{C}_Y\cap \mathcal{T}}  \delta_X^*(x,\lambda;\xi)\neq 0.
    \end{equation}
    Then global secondary bifurcation occurs from $(\mathcal{T},\mathcal{C}_Y)$ through $\mathcal C_X$ 
    and $(X,Y)$-symmetry-breaking occurs at all points of $\ov{\mathcal C_X\sm\mathcal C_Y}\cap (\mathcal
    C_Y\sm\mathcal T)\neq \emptyset$. 
  \end{thm}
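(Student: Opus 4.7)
The approach is to derive Theorem~\ref{thm_symbreak} from Lemma~\ref{lem:bifurcation} by verifying its two hypotheses. Hypothesis (a) is literally our assumption \eqref{eq:sym_break_condition}, so the entire task reduces to establishing hypothesis (b): the existence of an open neighbourhood $U \subset X\times\R$ of $\mathcal{C}_Y\cap\mathcal{T}$ on which $\mathcal{C}_X$ and $\mathcal{C}_Y$ coincide.

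First I would argue that $\mathcal{C}_Y\cap\mathcal{T}$ is finite. By (A1), $F$ is a compact perturbation of the identity, so every bounded set of zeros of $F$ is relatively compact; hence the bounded continuum $\mathcal{C}_Y$ is compact. Every point of $\mathcal{C}_Y\cap\mathcal{T}$ is a bifurcation point with respect to $\mathcal{T}$ (being accumulated by points of $\mathcal{C}_Y\setminus\mathcal{T}$), and Proposition~\ref{prop:unif_diff} tells us these form a discrete subset. A discrete subset of a compact set is finite, giving $\mathcal{C}_Y\cap\mathcal{T}=\{(x_1,\lambda_1),\ldots,(x_N,\lambda_N)\}$.

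Next, I would apply Theorem~\ref{thm_CR} separately at each $(x_i,\lambda_i)$ in the ambient space $X\times\R$ and in the closed subspace $Y\times\R$. In $X\times\R$, conditions (A3)--(A5) are assumed directly. To transfer them to $Y\times\R$, I would use that $F(Y\times\R)\subset Y$ with $Y$ closed implies $F_x(x_i,\lambda_i)(Y)\subset Y$ and $F_\lambda(x_i,\lambda_i)\in Y$; that the two kernel directions $(\bar x'(t_i),\bar\lambda'(t_i))$ and $\phi_i$ already lie in $Y\times\R$ by (A3) and because $\mathcal{T}\subset Y\times\R$, so the kernel in $Y\times\R$ coincides with that in $X\times\R$; and that $F_x(x_i,\lambda_i)|_Y$ is a compact perturbation of $\Id_Y$ and hence Fredholm of index zero on $Y$, which via an index computation upgrades the codimension-one range condition (A4) to $Y$. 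Assumption (A5) is inherited at once since $\ran_Y\subset \ran_X$. The Crandall--Rabinowitz curve produced in $Y\times\R$ is also a curve of non-trivial zeros of $F$ in $X\times\R$, so by the uniqueness statement of Theorem~\ref{thm_CR} it coincides with the $X$-curve. Thus in a small neighbourhood $U_i$ of $(x_i,\lambda_i)$, every non-trivial solution in $\Sigma_X$ already lies in $Y\times\R$.

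Finally, I would take the $U_i$ pairwise disjoint and, using Proposition~\ref{prop:unif_diff}, shrink them so that no other bifurcation point of $\mathcal{T}$ lies in $\overline{U_i}\cap\mathcal{T}$; then set $U=\bigcup_{i=1}^N U_i$. The two local half-curves of non-trivial solutions accumulate at $(x_i,\lambda_i)$, so together with that point they form a connected subset of $\overline{\Sigma_Y\setminus\mathcal{T}}$ meeting $\mathcal{C}_Y$, hence contained in $\mathcal{C}_Y$. Conversely, any point of $\mathcal{C}_X\cap U_i$ is either $(x_i,\lambda_i)$ itself or lies on the local CR curve, which is already in $\mathcal{C}_Y$. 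This proves $\mathcal{C}_X\cap U=\mathcal{C}_Y\cap U$ and thus hypothesis (b); Lemma~\ref{lem:bifurcation} then delivers the theorem. The main obstacle is the two-space Crandall--Rabinowitz bookkeeping, in particular checking that (A4) survives the passage from $X$ to $Y$ and then pinning down via uniqueness in Theorem~\ref{thm_CR} that the $X$-curve actually is the $Y$-curve; the remaining steps are direct connectedness arguments and a reference to Lemma~\ref{lem:bifurcation}.
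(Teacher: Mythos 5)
Your proposal is correct and follows essentially the same route as the paper: reduce to Lemma~\ref{lem:bifurcation}, note that (a) is the hypothesis \eqref{eq:sym_break_condition}, and obtain (b) by applying the Crandall--Rabinowitz Theorem at each of the finitely many points of $\mathcal C_Y\cap\mathcal T$ in both $X$ and $Y$ and using its local uniqueness statement to identify $\mathcal C_X$ with $\mathcal C_Y$ near $\mathcal C_Y\cap\mathcal T$. Your extra bookkeeping (the Fredholm index argument transferring (A4) to $Y$, and the inheritance of (A3), (A5)) only fills in details the paper leaves implicit when it asserts that Theorem~\ref{thm_CR} applies in both spaces.
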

  \begin{proof}
     We verify the assumptions of Lemma~\ref{lem:bifurcation}. Condition (a) holds thanks
     to~\eqref{eq:sym_break_condition}. In order to prove (b) we may write $\mathcal{C}_Y\cap \mathcal{T} =
     \{(x_0,\lambda_0),\ldots,(x_k,\lambda_k)\}$ thanks to discreteness of the set of bifurcation points on
     $\mathcal T$ from Proposition~\ref{prop:unif_diff}.
     By (A1)-(A5) the Crandall-Rabinowitz Theorem is applicable at each $(x_j,\lambda_j)$ both in $X$ and in
     $Y$, so we obtain continuous curves $(\hat x_j,\hat\lambda_j):(-\eps_j,\eps_j)\to Y\times\R$ and 
     w.l.o.g. mutually disjoint small neighbourhoods $U_j$ of $(x_j,\lambda_j)$ in $X\times\R$ with the
     properties mentioned in Theorem~\ref{thm_CR}. So $U:= \bigcup_{j=0}^k U_j$ is a neighbourhood of
     $\mathcal C_Y\cap \mathcal T$ in $X\times\R$ with the property 
     $$ 
       \mathcal C_Y \cap U 
       = \mathcal C_X \cap U 
       = \bigcup_{j=0}^k \{ (\hat x_j(s),\hat\lambda_j(s)) : |s|<\eps_j\} \cap U_j. 
     $$
     This proves (b) so that Lemma~\ref{lem:bifurcation} gives the result.    
 \end{proof}

  \section{Applications} \label{sec:Applications}

  In this section we apply Theorem~\ref{thm_symbreak} in order to detect secondary bifurcations via
  period-doubling, period-tripling, etc. for the stationary Lugiato-Lefever equation    
  \begin{equation} \label{eq:LL}
    d a'' + (i-\zeta)a + |a|^2a - if = 0,\qquad a:\R\to\C\text{ is }2\pi-\text{periodic}.  
  \end{equation}
  It was proposed in \cite{Lugiato_Lefever1987} as an accurate model for the description of the
  electric field in a ring resonator. The parameters $d,\zeta,f\in\R$ with $d,f\neq 0$ model
  physical effects originating from dispersion, detuning and forcing, respectively, and therefore vary
  according to the precise experimental setup. The term $ia$ incorporates damping and the nonlinear term is
  due to the use of Kerr-type materials as propagation media. The parameters $f$ and especially $\zeta$ may be
  calibrated rather easily in the laboratory in order to generate so-called frequency combs, i.e., electric
  fields with a broad frequency range of almost uniformly distributed and sufficiently large power per
  frequency. Such electric fields typically arise as spatially concentrated (soliton-like) solutions of \eqref{eq:LL}. In a joint work with
  {W. Reichel} \cite{MaRe} the author provided a detailed bifurcation analysis of the Lugiato-Lefever equation
  related to primary bifurcations from the family of constant solutions. Our aim here is to discuss secondary
  bifurcations for this problem. To this end we will show how the assumptions of Theorem~\ref{thm_symbreak}
  may be verified in the context of \eqref{eq:LL}. In order not to overload this paper with tedious
  computations, we will only present the main steps.
  %%%%%% MAPLE ?? 
  %The numerical verification of our claims may be done with
  %a MAPLE file the link to which is given at the end of this paper.
  %%%%%
  We start by  recalling the results obtained in~\cite{MaRe} about the primary bifurcations. 
    
  \subsection*{The functional analytical setting}  

  In order to prove the existence of nonconstant solutions via bifurcation theory it was
  shown in~\cite{MaRe} that so-called synchronized solutions of~\eqref{eq:LL} are precisely the zeros of the
  function $F:H^1_{per}([0,2\pi];\C)\times\R \to H^1_{per}([0,2\pi];\C)$ given by
  \begin{equation*}
	F(a,\zeta) :=  a- \sign(d) D^{-1}(-\zeta a + \sign(d) a+ |a|^2a + i a -if),
  \end{equation*}
  where $D$ denotes the differential operator $-|d|\frac{d^2}{dx^2}+1$ with homogeneous Neumann boundary
  conditions at $0$ and $\pi$ for both real and imaginary part, see Section~4.1 and equation~(23)
  in~\cite{MaRe} for details. These boundary conditions were
  chosen in order to benefit from simple kernels by ruling out the translation invariance of \eqref{eq:LL}. Moreover,
  they ensure the solutions to be symmetric about $0$ and $\pi$ and hence to be $2\pi$-periodic. Since not all
  solutions of~\eqref{eq:LL} are known to satisfy these boundary conditions, this special class of solutions
  was attributed the name ''synchronized'', see Definition~1.4 in~\cite{MaRe}.
  So $F$ satisfies assumption (A1), the parameters $d\neq 0,f\in\R$ are fixed and $\zeta$ will
  be considered as a bifurcation parameter.

  \subsection*{The trivial solution family $\mathcal T$ and its primary bifurcations} 

  In Lemma~2.1~(a) from \cite{MaRe} it was proved that there is a uniquely determined
  (unbounded) curve $\mathcal T = \{ (\bar a(t),\bar \zeta(t)):|t|<1\}$ consisting of constant solutions
  of~\eqref{eq:LL}. This curve is smoothly parametrized via
  \begin{equation}\label{eq:trivial_curve_parametrization}
     \bar a(t) = f(1-t^2)-ift(1-t^2)^{1/2},\qquad \bar\zeta(t) = f^2(1-t^2)+ t(1-t^2)^{-1/2} \qquad (|t|<1).  
  \end{equation} 
  Moreover, it was shown that for ``generic'' choices of $d$ and $f$ there are finitely many bifurcation
  points on $\mathcal T$ at $t=t_{k,1}$ or $t=t_{k,2}$ for $k=1,\ldots,k_{max}$. These points are
  characterized as the solutions of
  \begin{equation} \label{eq:FC_bifurcation_condition}
    (\bar\zeta(t)+dk^2)^2-4|\bar a(t)|^2(\bar\zeta(t)+dk^2) + 1+ 3|\bar a(t)|^4 = 0 \qquad
    (k=1,\ldots,k_{max}),
  \end{equation}
  see Proposition~4.3~\cite{MaRe}. In particular, (A2) is satisfied. In a neighbourhood of these bifurcation
  points the associated Rabinowitz continua consist of $2\pi/k$-periodic solutions, and they are bounded and
  therefore return to $\mathcal T$ at some other bifurcation point. This analysis benefits from the fact that
  the sufficient conditions of the Crandall-Rabinowitz Theorem (A3),(A4),(A5) are satisfied in all
  bifurcation points for ``generic'' $d$ and $f$. This follows from the fact that the assumptions
  (S),(T) from Theorem~1.4 in~\cite{MaRe} hold for most parameter values.
  The numerical investigations from Section~5.3 \cite{MaRe} suggest that the periodic pattern close to the bifurcation point may be lost
  along some of the bifurcating branches via secondary bifurcation. In the following we show how this
  phenomenon may be proved with the aid of Theorem~\ref{thm_symbreak}. Our theoretical results
  are illustrated in Figure~\ref{fig:2ndaryBif} using the Matlab software package \textit{pde2path}.
  
  \subsection*{Index computations}

  We apply Theorem~\ref{thm_symbreak} for the Banach spaces
   \begin{equation}\label{eq:XY}
    X= H^1_{per}\big(\frac{2\pi}{p};\C\big),\quad Y=H^1_{per}\big(\frac{2\pi}{q};\C\big) \qquad\text{where }p 
    \text{ divides }q. 
  \end{equation}
  Since we will not encounter bifurcation from turning points of $\mathcal T$ later, we only consider 
  $\xi=0$. For all bifurcation points $(\bar a(t_{k,i}),\bar\zeta(t_{k,i}))\in \mathcal
  C_Y\cap\mathcal T$ with $k\in q\N$ we will have to compute 
  \begin{equation}\label{eq:bifurcation_indices}
    \delta^*_X(\bar a(t_{k,i}),\bar\zeta(t_{k,i});0) 
    %= \ind(F(\cdot,\bar\zeta(t_j)+\eps), \bar a(t_j))- \ind(F(\cdot,\bar\zeta(t_j)-\eps), \bar a(t_j))
    = \sign(\bar\zeta'(t_{k,i}))\cdot  (\iota_X(t_{k,i}+\eps)-\iota_X(t_j-\eps)) \qquad (i=1,2),
  \end{equation}
  see~\eqref{eq:formula2_deltastar}, where $\iota_X(t)$ is given by 
  $$
    \iota_X(t)
    :=\ind_X(F(\cdot,\bar\zeta(t)), \bar a(t))
    =\ind_X(F_a(\bar a(t),\bar\zeta(t)),0).
  $$ 
  Given the definition of the Leray-Schauder index and the fact that all eigenvalues are simple, the
  quantities $\iota_X(t_{k,i}+\eps),\iota_X(t_{k,i}-\eps)$ can be computed by counting the
  negative eigenvalues of the linearized operator $F_a(\bar a(t),\bar \zeta(t))[\cdot]:X\to X$. These
  eigenvalues can be computed with the aid of Proposition~4.3~\cite{MaRe}. In the notation from~\cite{MaRe}
  one finds that $E$ is an eigenvalue of this operator in $X$ if and only if the determinant of one the matrices
  $dl^2\Id-N(a,\zeta)-E(dl^2+\sign(d))\Id$ for $l\in p\N$ vanishes. Plugging in the formula for $N(a,\zeta)$
  from Proposition~4.2 \cite{MaRe} we get that the eigenvalues satisfy
  \begin{equation} \label{eq:eigenvalue_formula}
     (\bar\zeta(t)+dl^2-E(dl^2+\sign(d)))^2-4|\bar a(t)|^2(\bar\zeta(t)+dl^2-E(dl^2+\sign(d))) + 1 + 3|\bar
     a(t)|^4 = 0  
  %  \alpha_l(t)^2-4|\bar a(t)|^2\alpha_l(t) + 1 + 3|\bar
   % a(t)|^4 = 0  
    %\quad\text{where }x = \bar\zeta(t)+dl^2-E(dl^2+\sign(d))
  \end{equation}
  for some $l\in p\N$ coming with $2\pi/l-$periodic eigenfunctions. So $\iota_X(t_{k,i}\pm \eps)$ is the
  number of negative $E$ solving \eqref{eq:eigenvalue_formula} for some $l\in p\N$, which can be computed
  rather easily using a computer.
  Notice that the eigenvalues change with $p$ and hence
  with the ambient space $X$.

  \subsection*{Computing $\mathcal C_Y\cap \mathcal T$}
  
  Above we pointed out that all bifurcation points in $H^1_{per}([0,2\pi];\C)$ are  
  of the form $(\bar a(t_{k,1}),\bar\zeta(t_{k,1})),(\bar a(t_{k,2}),\bar\zeta(t_{k,2}))$
  for $k=1,\ldots,k_{max}$ with $2\pi/k$-periodic eigenfunctions of the (simple) zero eigenvalue.
  Choosing $Y$ as in \eqref{eq:XY} we find that these points, being bifurcation points in $Y$, can  belong to
  $\mathcal C_Y$ only if $k\in q\N$. So the choice $k_{max}/2<q\leq k_{max}$ ensures $k=q$. In particular,
  $\mathcal C_Y\cap\mathcal T \subset \{ (\bar a(t_{q,1}),\bar\zeta(t_{q,1})), (\bar
    a(t_{q,2}),\bar\zeta(t_{q,2}))\}$.  
  Moreover, $\mathcal C_Y$ is bounded by the a priori bounds from Theorem~1.1 and Theorem~1.2 in~\cite{MaRe}, 
  so that the set $\mathcal C_Y\cap\mathcal T$ contains at least two elements by Rabinowitz' bifurcation
  theorem, see~\eqref{eq:degree_balance_RabDan} and the explanations thereafter. 
  From these two facts we deduce 
  \begin{equation} \label{eq:CYT}
    \mathcal C_Y \cap \mathcal T = \{ (\bar a(t_{q,1}),\bar\zeta(t_{q,1})), (\bar
    a(t_{q,2}),\bar\zeta(t_{q,2}))\} \qquad\text{provided }q\in\N,\,k_{max}/2<q\leq k_{max}.
  \end{equation}

  \subsection*{Summary}  
  For generic $d\neq 0$ and $f\in\R$ we find $k_{max}\in\N_0$ such that the curve of constant solutions
  $\mathcal T$ given by \eqref{eq:trivial_curve_parametrization} contains $2k_{max}$ bifurcation points in
  $H^1_{per}([0,2\pi];\C)$ at $t=t_{k,1}$ or $t=t_{k,2}$ for $k=1,\ldots,k_{max}$.
  %The bifurcating solutions are $2\pi/k$ periodic close to these bifurcation points. 
  Choosing then $X,Y$ as in \eqref{eq:XY} with $k_{max}/2<q\leq k$ we get \eqref{eq:CYT}. The
  sufficient condition~\eqref{eq:sym_break_condition} for symmetry breaking secondary bifurcation can then be 
  verified using~\eqref{eq:bifurcation_indices} where the Leray-Schauder indices $\iota_X(t_j\pm \eps)$ 
  is $-1$ to the number of negative $E$ solving \eqref{eq:eigenvalue_formula} for some $l\in p\N$. 
  %We refer  to the above-mentioned MAPLE file for the details of the computations.

  \subsection*{Secondary bifurcations}
  
  For simplicity we now focus on a special case. We choose $f=1.6,d=0.1$ so that the interested reader may
  compare our results to those presented in Section~5.3 of the paper~\cite{MaRe}. 
%   The strategy
%   we follow below may be implemented for ''generic'' $f$ and $d$, i.e., for all $f,d\neq 0$ such that the
%   simplicity and transversality conditions (S),(T) from Theorem~1.4 in~\cite{MaRe} hold.
  In this particular case the equation~\eqref{eq:FC_bifurcation_condition} has
  exactly 14 solutions, two for each $k\in\{1,\ldots,7\}$, i.e., $k_{max}=7$. The numerical values for these
  solutions $t_{k,1},t_{k,2}$ yielding the bifurcation points $(\bar a(t_{k,1}),\bar\zeta(t_{k,1})),(\bar
  a(t_{k,2}),\bar\zeta(t_{k,2}))$ in the ambient space $H^1_{per}([0,2\pi];\C)$ are provided
  in~Figure~\ref{bifurcation_points}.
  
  \medskip 
    
\begin{figure}[h!] 
  \renewcommand*{\arraystretch}{1.2}
\centering
\begin{tabular}{|c||c|c|c|c|c|c|c|} \hline 
~ & $k=1$ & $k=2$ & $k=3$ & $k=4$  & $k=5$ & $k=6$ & $k=7$  \\ 
\hline 
$t_{k,1}$ &  $~0.10528$   & $-0.18543$ & $-0.52046$  & $-0.72866$  &  $-0.77281$ &  $-0.6169$5 & $-0.20600$ \\
$t_{k,2}$  &   $~0.77130$ & $~0.75556$ &  $~0.72127$ &  $~0.66089$ & $~0.56321$ & $~0.40312$  & $~0.01535$ \\ 
\hline  %\vspace{1mm}
 $\bar\zeta(t_{k,1})$ &  $~2.63750$    &  $~2.28327$   & $~1.25702$  
 &  $~0.13682$ & $-0.18666$ & $~0.80166$ & $~2.24085$ \\
 $\bar\zeta(t_{k,2})$ & $~2.24888$  &   $~2.25196$ &  $~2.26952$
 &  $~2.32248$ & $~2.42954$ &  $~2.58449$ &  $~2.57475$ \\
 \hline  %\vspace{1mm}
 $\Real(\bar a(t_{k,1}))$ &  $~0.64816$   &
 $~0.68661$   & $~0.76763$  
 &  $~0.90117$ & $~1.09247$ & $~1.34000$ & $~1.59962$ \\
 $\Imag(\bar a(t_{k,1}))$ & $-0.78546$  & $-0.79192$ & $-0.79934$ & $-0.79358$ & $-0.74462$ &
 $-0.59026$ & $-0.02455$\\
 $\Real(\bar a(t_{k,2}))$ & $~1.58226$ &   $~1.54499$ &  $~1.16659$
 & $~0.75049$ & $~0.64442$ & $~0.99099$ &  $~1.53210$\\
 $\Imag(\bar a(t_{k,2}))$  & $-0.16752$ &  $~0.29154$
 & $~0.71106$ & $~0.79847$ &  $~0.78473$ & $~0.77687$ & $~0.32253$\\
\hline
\end{tabular}
  \caption{Bifurcation points on $\mathcal T$ for $f=1.6,d=0.1$}
  \label{bifurcation_points}
\end{figure} 

  \medskip
  
  For notational convenience we write 
  $z_{k,i}:=(\bar a(t_{k,i}),\bar\zeta(t_{k,i}))$ for the bifurcation points in
  $H^1_{per}([0,2\pi];\C)$. Using \eqref{eq:CYT} it is possible to check the symmetry breaking condition
  \eqref{eq:sym_break_condition} with the aid of formula~\eqref{eq:bifurcation_indices}. 
  Doing so for the spaces $X,Y$ from \eqref{eq:XY},
  Theorem~\ref{thm_symbreak} yields the following:
  %by counting the (simple) eigenvalues $E<0$ solving \eqref{eq:eigenvalue_formula} with the aid of a computer
  % reveals the following:
  \begin{itemize}
    \item[(1)] $q=7,p=1$: Then $\delta_X^*(z_{7,1})+\delta_X^*(z_{7,2})=-4$, so the
    symmetry-breaking condition \eqref{eq:sym_break_condition} from Theorem~\ref{thm_symbreak} is satisfied 
    and secondary bifurcation from $(\mathcal{T},\mathcal C_Y)$ occurs via period-septupling. 
    %No precise  information about $\mathcal C_X\cap \mathcal T$ can be given in this case.
%     \\  \b{Not on the picture, but true!} 
%     \r{In principle we may test all possible configurations of $\mathcal C_X\cap \mathcal T$ but one finds
%     that there are many configurations satisfying~\eqref{eq:degree_balance}, so we can not provide more
%     details about $\mathcal C_X$. Numerically, \ldots} 
    \item[(2)] $q=6,p=3$: Here we find $\delta_X^*(z_{6,1})+\delta_X^*(z_{6,2})=-4$, which implies
    secondary bifurcation by period-doubling from $2\pi/6-$periodic into $2\pi/3$-periodic solutions. 
    Moreover,  $\delta_X^*(z_{3,1})=\delta_X^*(z_{3,2})=2$ implies  
    $\mathcal C_X\cap \mathcal T = \{ z_{6,1}, z_{6,2}, z_{3,1}, z_{3,2}\}$ because of
    Lemma~\ref{lem:bifurcation}~(ii). Item (3) even reveals that in a larger space, for instance in
    $H^1_{per}([0,2\pi];\C)$, we will discover further secondary bifurcations. 
    \item[(3)] $q=6,p=2$: Then $\delta_X^*(z_{6,1})+\delta_X^*(z_{6,2})=4$, so secondary
    bifurcation via period-tripling occurs. 
    \item[(4)] $q=4,p=2$: From $\delta_X^*(z_{4,1})+\delta_X^*(z_{4,2})=-4$ we deduce secondary bifurcation
    via period-doubling. 
\end{itemize}
  In particular, we conclude: for $f=1.6,d=0.1$ and each pair $(q,p)\in
  \{(7,1),(6,3),(6,2),(4,2)\}$ there is a sequence of $2\pi/p$-symmetric but not $2\pi/q$-symmetric solutions
  of~\eqref{eq:LL} converging to a nonconstant $2\pi/q$-symmetric solution. 
%   The computations related to the
%   above statements (also for other values of $f$ and $d$) can be verified using the MAPLE file provided on
%   the webpage {\tt http://www.math.kit.edu/iana5/}.

  \medskip
   
  \begin{figure}[h!] 
   \subfigure[$q=7,p=1$]{ 
      \includegraphics[scale=0.53]{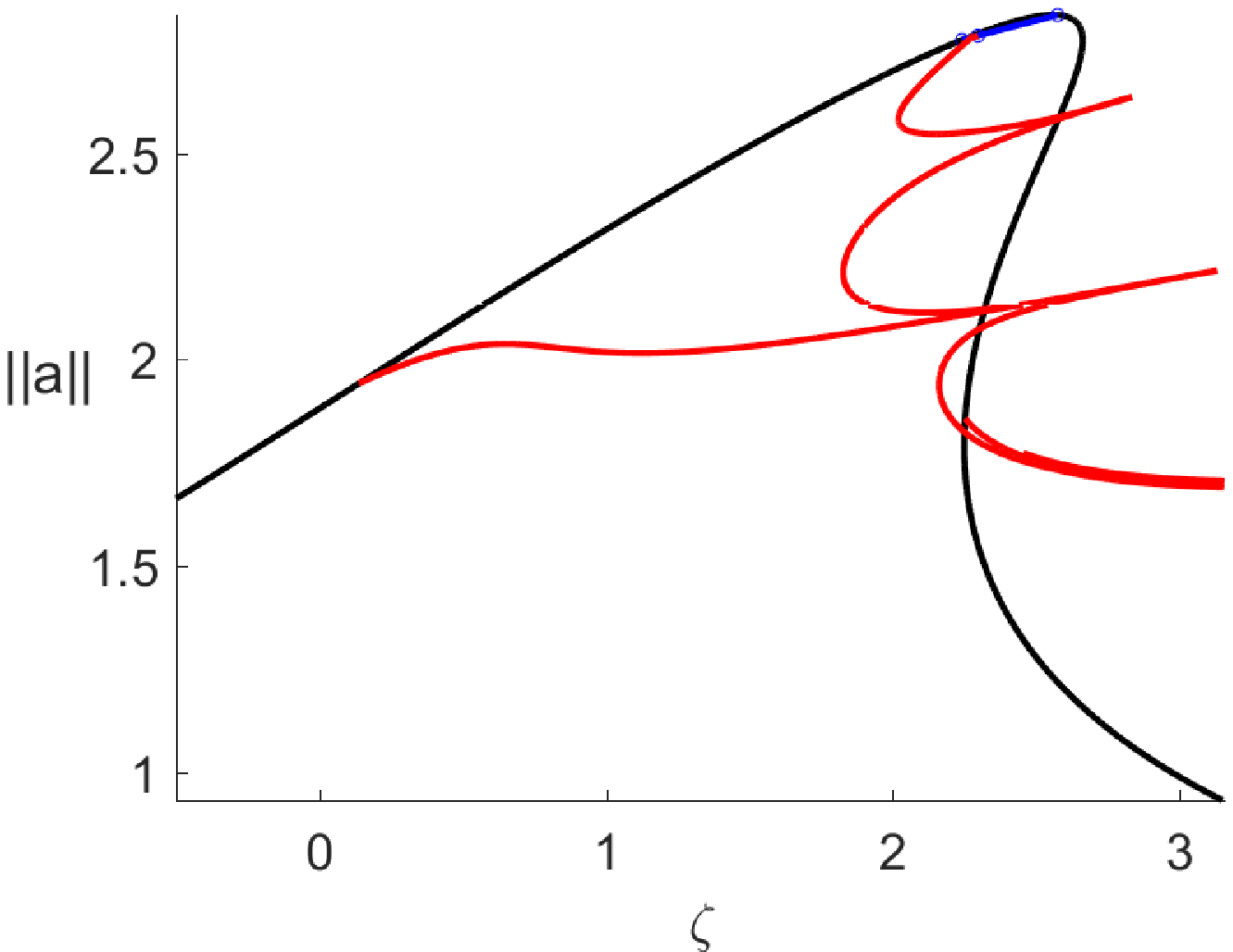}
      }
  \hfill
  \subfigure[$q=6,p=3$]{  
      \includegraphics[scale=0.53]{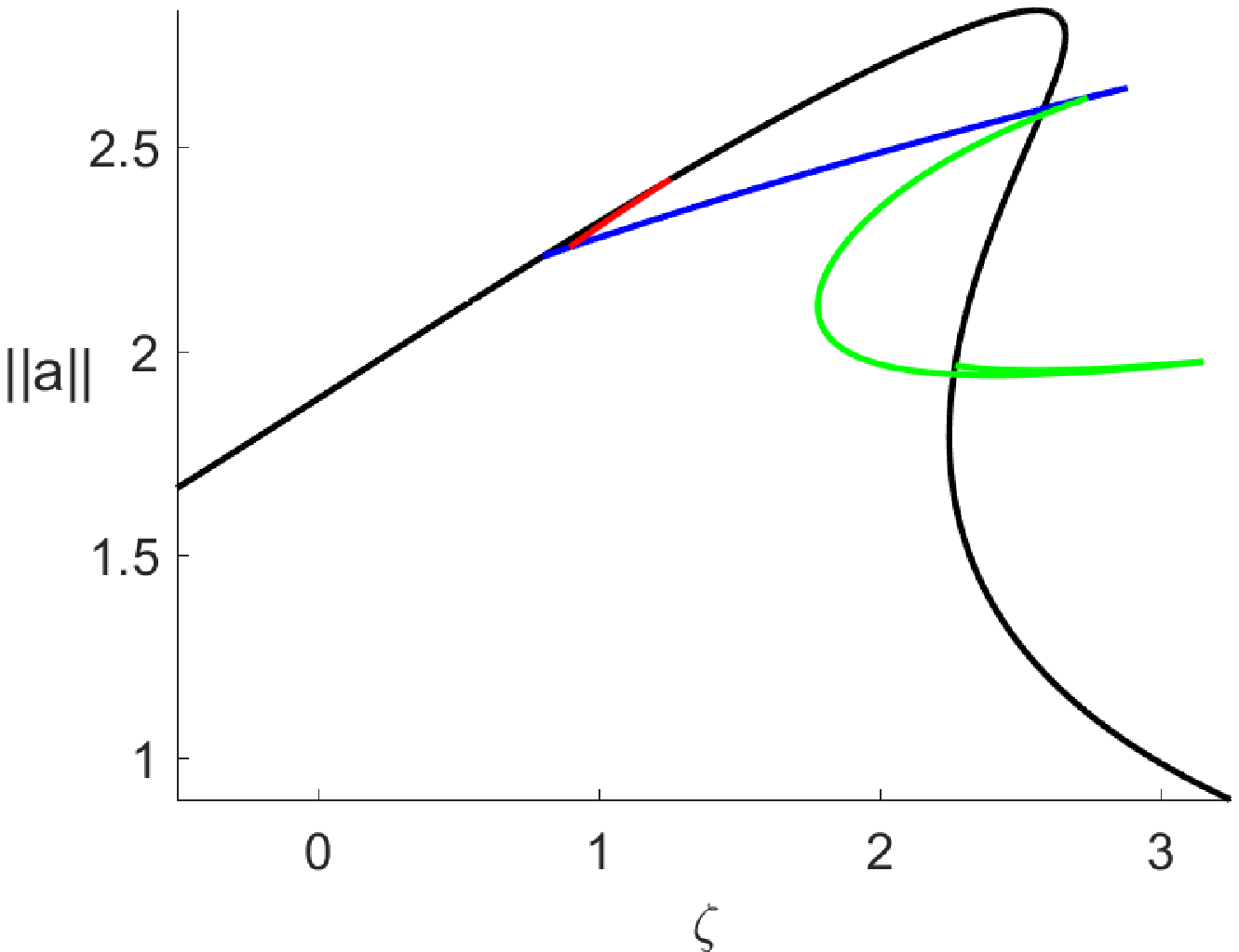}
      }
   \subfigure[$q=6,p=2$]{ 
      \includegraphics[scale=0.53]{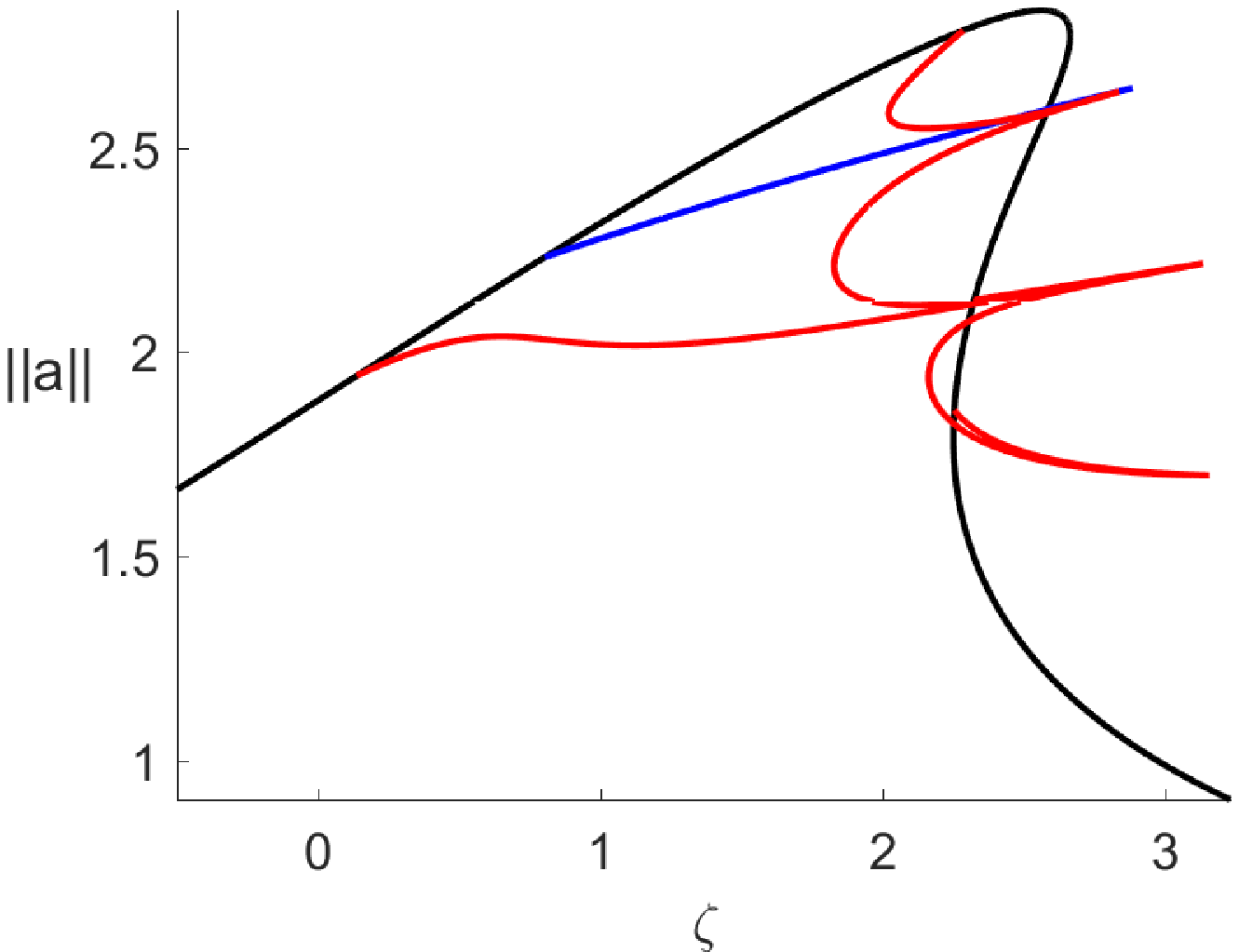}
      } 
  \hfill
  \subfigure[$q=4,p=2$]{ 
      \includegraphics[scale=0.53]{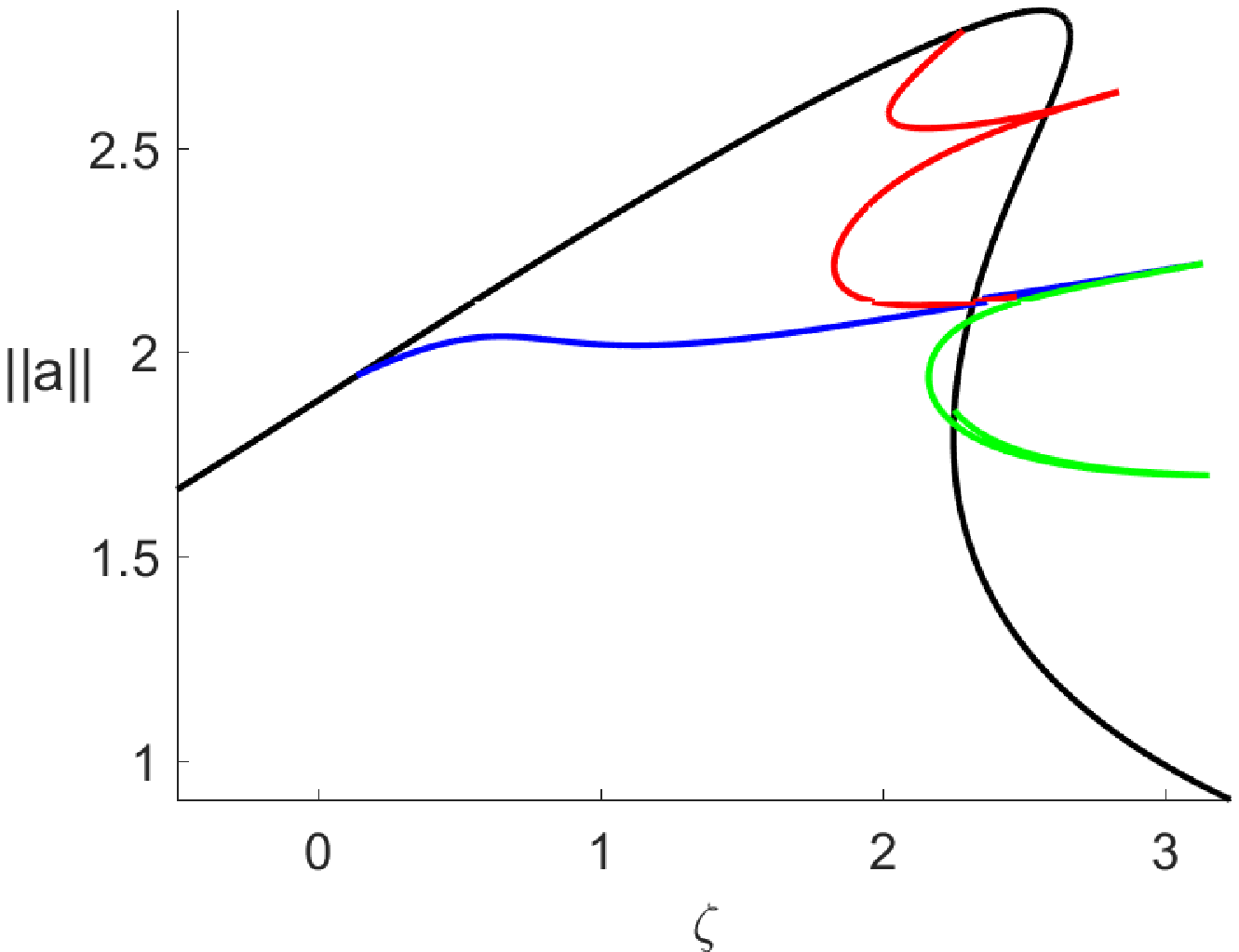}
      }
   \caption{$\mathcal T$ (black), primary branches (blue), secondary branches (green,red)}
   \label{fig:2ndaryBif}
\end{figure}
 
 \section{Appendix A: Proof of Theorem~\ref{thm:DanRab}}
   
   We finally provide the proof of Dancer's and Rabinowitz' results from Theorem~\ref{thm:DanRab}
   under the relaxed assumptions (A1),(A2). As in \cite{Dancer_On_the_structure,Rab:global} the main
   arguments rely on well-known properties of the Leray-Schauder degree. We refer to the survey article 
   \cite{Maw_LSdegree} and the books \cite{AmbMal:Nonlinear_analysis,Deim} for more information about these
   topics. The following property will be especially important.
    
%    \begin{itemize}
%     \item[(A1)] $F\in C^1(X\times\R,X)$ is a compact perturbation of the identity,      
% %     \item[(A2)] $\mathcal T=\{(\bar x(t),\bar \lambda(t)):t\in I\} \subset X\times\R$ is a closed
% %     $C^1-$curve parametrized over some \b{open} interval $I\subset\R$ such that $F|_{\mathcal T}=0$ and the
% %     set $\Xi$ of degenerate points on $\mathcal T$ is discrete in $X\times\R$. \b{or periodic curve?} \\
% %     \b{Closed $C^1$-manifold wothout boundary}
%     \item[(A2)] $\mathcal T\subset X\times\R$ is a closed 1-submanifold of class $C^1$ without
%     boundary such that $F|_{\mathcal T}=0$ and the subset of degenerate points on $\mathcal T$ is
%     discrete.
%   \end{itemize}

  \begin{lem}[Generalized homotopy invariance] \label{lem:hom_inv}
    Let $X$ be a real Banach space, $\Omega\subset X\times \R$ open and bounded and $F\in C(\ov\Omega;X)$ a
    compact perturbation of the identity. If $F(x,t)\neq 0$ for all $x\in \partial \Omega_t$ with $t\in
    [0,1]$, then $t\mapsto d(F(\cdot,t),\Omega_t,0)$ is constant on $[0,1]$.
  \end{lem}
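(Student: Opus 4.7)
The plan is to show that the function $t \mapsto d(F(\cdot,t), \Omega_t, 0)$ is locally constant on $[0,1]$, from which the claim follows by connectedness. The main obstacle is that the classical homotopy invariance of the Leray--Schauder degree requires a \emph{fixed} domain, whereas here $\Omega_t$ varies with $t$. I plan to circumvent this by the excision property: near any fixed $t_0$, I will replace $\Omega_t$ with a fixed bounded open set $U \subset X$ that captures every zero of $F(\cdot,t)$ for $t$ close to $t_0$, and then apply the usual homotopy invariance on $U$.

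The key preparatory step is the compactness of $Z := F^{-1}(0) \cap \overline{\Omega}$. Writing $F = \Id - K$ with $K$ compact and continuous on $\overline{\Omega}$, any $(x,t) \in Z$ satisfies $x = K(x,t)$, so $Z$ lies inside $\overline{K(\overline{\Omega})} \times \pi_{\R}(\overline{\Omega})$, a product of compact sets; being closed, $Z$ is compact. Now fix $t_0 \in [0,1]$ and let $K_{t_0} := \{x \in X : (x,t_0) \in Z\}$, a compact subset of $X$. The hypothesis that $F(\cdot,t_0) \neq 0$ on $\partial \Omega_{t_0}$ forces $K_{t_0} \subset \Omega_{t_0}$, and since $\Omega$ is open and $K_{t_0} \times \{t_0\}$ is compact, there exist $\eps, \delta > 0$ with $B_\eps(K_{t_0}) \times (t_0-\delta, t_0+\delta) \subset \Omega$. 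Set $U := B_\eps(K_{t_0})$.

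After possibly shrinking $\delta$, the compactness of $Z$ guarantees that for every $t \in J := (t_0-\delta, t_0+\delta) \cap [0,1]$ one has (a) every zero of $F(\cdot,t)$ in $\overline{\Omega_t}$ lies inside $U$, and (b) $F(\cdot,t)$ is nonzero on $\partial U$. If either fails, extract a sequence $(x_n, t_n) \in Z$ with $t_n \to t_0$ and $d(x_n, K_{t_0}) \geq \eps$; a convergent subsequence supplied by compactness of $Z$ then yields a limit in $K_{t_0}$ at distance $\geq \eps$ from itself, a contradiction. The boundary case $K_{t_0} = \emptyset$ is handled in the same way, with all degrees involved vanishing for $t$ near $t_0$. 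With (a) and (b) in place, excision yields $d(F(\cdot,t), \Omega_t, 0) = d(F(\cdot,t), U, 0)$ for every $t \in J$, while the standard homotopy invariance applied on the fixed open bounded set $U$ (using that $(x,t) \mapsto F(x,t)$ is a continuous family of compact perturbations of the identity not vanishing on $\partial U$ for $t \in J$) gives $d(F(\cdot,t), U, 0) = d(F(\cdot,t_0), U, 0)$. Combining these equalities shows that $t \mapsto d(F(\cdot,t), \Omega_t, 0)$ is locally constant on $[0,1]$, hence constant.
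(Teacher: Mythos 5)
The paper does not prove this lemma at all: it is quoted as a known result with a reference to Theorem~4.1 in Ambrosetti--Malchiodi. Your proposal supplies the standard proof behind that citation, and it is essentially correct: compactness of the zero set $Z=F^{-1}(0)\cap\ov\Omega$ (via $x=K(x,t)$), a tube-lemma neighbourhood $U\times J\subset\Omega$ of the slice of zeros at $t_0$, excision to pass from the moving domain $\Omega_t$ to the fixed domain $U$, and then classical homotopy invariance on $U$; local constancy plus connectedness of $[0,1]$ finishes it. One point deserves care, and it is really a defect of the statement rather than of your argument: your step ``$F(\cdot,t_0)\neq 0$ on $\partial\Omega_{t_0}$ forces $K_{t_0}\subset\Omega_{t_0}$'' uses that every zero of $F(\cdot,t_0)$ lying in the slice $(\ov\Omega)_{t_0}$ of the closure is already in $\Omega_{t_0}$. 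Since $(\ov\Omega)_{t_0}=\Omega_{t_0}\cup(\partial\Omega)_{t_0}$ and in general $\partial(\Omega_{t_0})\subsetneq(\partial\Omega)_{t_0}$ (e.g.\ when $\Omega$ pinches off, as for $\Omega=\{(x,t):|x|<t\}$, where $F(x,t)=x$ violates the conclusion while satisfying the slicewise boundary condition), the hypothesis must be read as $F(x,t)\neq 0$ for all $x\in(\partial\Omega)_t$, i.e.\ on the slices of the boundary of $\Omega$ in $X\times\R$, which is how the cited reference states it. With that reading your proof is complete; you may also want to shrink $\eps$ slightly so that the closed neighbourhood $\ov U\times\ov J$ sits inside $\ov\Omega$, ensuring $F$ is defined where the fixed-domain homotopy invariance is applied, but this is routine.
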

  
  For a proof of this well-known result we refer to Theorem 4.1 in~\cite{AmbMal:Nonlinear_analysis}.
  We mention that $\Omega_t:=\{ x\in X: (x,t)\in\Omega\}$, $t\in\R$, denote the slices of
  $\Omega$. Moreover, the projection of $\Omega$ onto the parameter space  will be denoted by 
  $\pr(\Omega):= \{t\in\R : \Omega_t \neq \emptyset\}$. Furthermore, we recall the following result, cf.
  Lemma~29.1~\cite{Deim}.
  
  \begin{lem}[Whyburn] 
    Let $(M,d)$ be a compact metric space, $A\subset M$ a component and $B\subset M$ closed such that $A\cap
    B=\emptyset$. Then there exist compact $M_A\supset A,M_B\supset B$ such that $M=M_A\cup M_B$ and
    $M_A\cap M_B=\emptyset$.
  \end{lem}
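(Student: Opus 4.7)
The plan is to reduce the statement to finding a single clopen set $U \subset M$ with $A \subset U$ and $U \cap B = \emptyset$. Given such a $U$, the choice $M_A := U$ and $M_B := M \setminus U$ immediately yields the conclusion: both sets are clopen in the compact space $M$ and therefore compact; we have $A \subset M_A$ and $B \subset M_B$ by construction; and they form a disjoint partition of $M$.

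To produce such a clopen separator, I would first establish the classical fact that in a compact Hausdorff space the component $A$ equals the intersection of all clopen subsets of $M$ containing $A$. Write $Q := \bigcap \{U \subset M : U \text{ clopen},\, A \subset U\}$. The inclusion $A \subset Q$ is immediate, since for every clopen $U$ the pair $(U \cap A, (M \setminus U) \cap A)$ is a clopen decomposition of the connected set $A$; hence either $A \subset U$ or $A \cap U = \emptyset$, and only the first possibility is compatible with the definition of $Q$. For the reverse inclusion $Q \subset A$, I would argue that $Q$ is connected (and thus equals $A$ by maximality of the component). Suppose towards a contradiction that $Q = K_1 \sqcup K_2$ splits into disjoint nonempty closed subsets; connectedness of $A$ forces $A \subset K_1$ say. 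Normality of compact Hausdorff spaces furnishes disjoint open neighbourhoods $V_1 \supset K_1$ and $V_2 \supset K_2$. Then the family consisting of $V_1 \cup V_2$ and the complements $M \setminus U$ for clopen $U \supset A$ forms an open cover of $M$; by compactness finitely many clopen sets $U_1, \ldots, U_n$ containing $A$ satisfy $W := U_1 \cap \cdots \cap U_n \subset V_1 \cup V_2$. The set $W \cap V_1$ is open in $M$, and it is closed in $W$ because its complement $W \cap V_2$ in $W$ is also open; thus $W \cap V_1$ is clopen in $M$, contains $A$, but misses $K_2$, contradicting the definition of $Q$.

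With $A = Q$ established, the conclusion is routine. For each $b \in B$ pick a clopen set $U_b \supset A$ with $b \notin U_b$; this is possible because $b \notin Q$. The collection $\{M \setminus U_b\}_{b \in B}$ is an open cover of the compact set $B$, so finitely many indices $b_1, \ldots, b_n$ already cover $B$. The intersection $U := U_{b_1} \cap \cdots \cap U_{b_n}$ is then clopen, contains $A$, and is disjoint from $B$, providing the desired separator. The main obstacle is the proof that the quasi-component $Q$ is connected; the remainder of the argument is straightforward bookkeeping. One may also invoke the coincidence of components and quasi-components in compact Hausdorff spaces as a standard topological fact and thereby shorten the proof considerably.
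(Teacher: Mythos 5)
Your proof is correct and complete. Note, however, that the paper does not prove this lemma at all: it is quoted as a known result with a reference to Lemma~29.1 in Deimling's book, so there is no in-paper argument to compare against. Your route is the standard one for this statement (often attributed to \v{S}ura-Bura/Whyburn): reduce everything to producing a single clopen set $U$ with $A\subset U$ and $U\cap B=\emptyset$, and obtain it from the identity ``component $=$ quasi-component'' in compact Hausdorff spaces, which you prove correctly via the compactness/normality argument showing that a hypothetical splitting $Q=K_1\sqcup K_2$ yields a smaller clopen set $W\cap V_1\supset A$ missing $K_2$. The final covering argument extracting a finite subfamily of the $U_b$, $b\in B$, is also fine (with the trivial degenerate case $B=\emptyset$ handled by $U=M$). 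Two small observations: your argument never uses the metric, only compactness and normality, so it actually proves the lemma for arbitrary compact Hausdorff spaces, slightly more general than stated; and the justification you give for $A\subset Q$ is more elaborate than needed, since $A\subset Q$ is immediate from the definition of $Q$ as an intersection of sets each containing $A$ (the clopen dichotomy ``$A\subset U$ or $A\cap U=\emptyset$'' is only needed to see that $Q$ coincides with the quasi-component of any point of $A$, which you do not actually use). In short: a valid, self-contained proof of a statement the paper only cites.
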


  \medskip

  \noindent {\it Proof of Theorem~\ref{thm:DanRab}:}\;  We assume that $\mathcal C$ is bounded. Exploiting the
  discreteness of the set of bifurcation points $\Xi\subset\mathcal T$, see
  Proposition~\ref{prop:unif_diff}, we find that $\mathcal C\cap \mathcal T$ is finite and
  \begin{align} 
    \ov{B_\rho(\mathcal C)}\cap  \Xi 
    &= \mathcal C\cap \mathcal T
    = \{ (x_{ij},\lambda_i): i=0,\ldots,k,\, j=0,\ldots,m_i\} \label{eq:C_cap_T}  
  \end{align}
  where $\rho>0$ is sufficiently small, $k\in\N_0$,$m_0,\ldots,m_k\in\N_0$ and the trivial solutions
  $(x_{ij},\lambda_i)$ are all different from each other. Without loss of generality we may assume
  $\lambda_0<\lambda_1<\ldots<\lambda_k$.
  Here, $B_\rho(\mathcal C):= \{(x,\lambda)\in X\times\R:
  \dist((x,\lambda),\mathcal C)<\rho\}$ denotes the open ball in $X\times\R$ around $\mathcal C$. 
  Replacing $F,\mathcal T$ by $\tilde F,\tilde{\mathcal T}$ given by  
  \begin{equation} \label{eq:change_coordinates}
    \tilde F(x,\lambda):=F(x,\lambda+\skp{\xi}{x}_{X'}),\qquad
    \tilde{\mathcal T}:=\{(x,\lambda-\skp{\xi}{x}_{X'}) : (x,\lambda)\in\mathcal T\}
  \end{equation}
  %$\skp{\xi}{x'(t_j)}_{X'}\neq \lambda'(t_j)$ at bifurcation points, hence no turning points w.r.t. this
  % axis.   \\
  we may without loss of generality assume that $\mathcal C\cap \mathcal T$ does not have turning
  points so that $\xi=0$ is transverse to $\mathcal C\cap \mathcal T$. So it remains to
  prove~\eqref{eq:degree_balance_RabDan} in this special case.
  Due to the absence of turning points in $\mathcal C\cap \mathcal T$ we have (eventually after shrinking $\rho>0$)
  \begin{equation*}
    ( \mathcal T \cap \ov{B_\rho(\mathcal C)}\setminus \mathcal C )_{\lambda_i}
    = \emptyset \quad\text{ for } i=0,\ldots,k. \label{eq:partO_cap_T}
  \end{equation*}
  
  \medskip 
     
  Since $\mathcal S\cap \ov{B_\rho(\mathcal C)}$ is compact, we may invoke Whyburn's Lemma to get $\mathcal S\cap
  \ov{B_\rho(\mathcal C)} = K_1 \cup K_2$ where $K_1,K_2$ are disjoint compact sets such that
  $\mathcal C\subset K_1$ and $\mathcal S\cap \partial B_\rho(\mathcal C)\subset K_2$. 
  Then, for $0<\delta<\min\{\dist(K_1,K_2),\dist(K_1,\partial B_\rho(\mathcal C))\}$, the open set 
  $\mathcal O:= B_\delta(K_1)\subset B_\rho(\mathcal C)$ is a bounded open neighbourhood of $\mathcal C$ 
  satisfying
  \begin{itemize}
    \item[(i)] $\partial \mathcal O \cap \mathcal S =\emptyset$ and $(\partial \mathcal O\cap \mathcal
    T)_{\lambda_i} = \emptyset$ for $i=0,\ldots,k$,
    \item[(ii)] $\ov{\mathcal O} \cap \Xi = \mathcal C\cap \mathcal T$ is given by \eqref{eq:C_cap_T}.
  \end{itemize}
%     \item[(iii)] For $|\lambda-\lambda_i|$ sufficiently small we have 
%     $$
%       (\mathcal O\cap \mathcal T)_\lambda 
%       = \bigcup_{j=0}^{m_i} \{(\bar x(t),\bar \lambda(t)) : t\in I_j, \bar\lambda(t)= \lambda\}
%     $$ 
%     where $I_j\subset I$ is an open 
%     \ldots $x=\bar x(???)
%     \lambda=\bar\lambda(???)$ \b{$(\mathcal O \cap \mathcal T)_\lambda$ consists of one
%     element for $|\lambda-\lambda_0|\leq \eps$ and $(\partial \mathcal O\cap \Sigma)_\lambda = \emptyset.$} 
  Next we define for $\lambda\in\R\sm\{\lambda_0,\ldots,\lambda_k\}$  
  \begin{align*}
    \mu(\lambda) 
    &:= \lim_{r\to 0^+} d(F(\cdot,\lambda), B_r(\mathcal T)_\lambda,0),  
    %&&\text{for } 0<|\lambda-\lambda_i|<\eps,\; i=0,\ldots,k
    \\
      %= \sum_{(x,\lambda)\in \mathcal O\cap \mathcal T} \ind(F(\cdot,\lambda),x)\\ 
    \nu(\lambda)
    &:= \lim_{r\to 0^+}d(F(\cdot,\lambda),(\mathcal{O} \sm \ov{B_r(\mathcal T)})_\lambda,0).
    %&&\text{for } \lambda\notin \{\lambda_0,\ldots,\lambda_k\}. 
  \end{align*}
  Then $\mu(\lambda)$ is well-defined since $\mathcal T_\lambda$ does not contain a bifurcation point 
  due to $\lambda\in\R\sm\{\lambda_0,\ldots,\lambda_k\}$  and (ii). Similarly, $\nu(\lambda)$ is well-defined
  in view of $\partial \mathcal O\cap \mathcal S=\emptyset$ by construction of $\mathcal O$.
  We now prove the following equalities for sufficiently small $\eps>0$:
    \begin{itemize}
      \item[(a)]  $\mu(\lambda_i-\eps)+\nu(\lambda_i-\eps) = 
      \mu(\lambda_i+\eps)+\nu(\lambda_i+\eps)$ for $i=0,\ldots,k$, 
      \item[(b)] $\nu(\lambda_i+\eps) = \nu(\lambda_{i+1}-\eps)$ for $i=0,\ldots,k-1$, 
      \item[(c)] $\nu(\lambda_0-\eps)=0$ and $\nu(\lambda_k+\eps)=0$.
    \end{itemize} 
   Choose $\eps>0$ such that $B_r(\mathcal T)_\lambda\subset \mathcal{O}_\lambda$ 
   for $|\lambda-\lambda_i|\leq \eps$. Then, by (i), $\partial O_\lambda$ does not contain any zeros of
   $F(\cdot,\lambda)$ whenever $|\lambda-\lambda_i|\leq \eps$. Hence, the additivity and the homotopy
   invariance of the degree yield
    \begin{align*}
      \mu(\lambda_i-\eps)+\nu(\lambda_i-\eps)
      &= d(F(\cdot,\lambda_i-\eps),\mathcal{O}_{\lambda_i-\eps},0) \\
      &= d(F(\cdot,\lambda_i+\eps),\mathcal{O}_{\lambda_i+\eps},0) \\
      &= \mu(\lambda_i+\eps)+\nu(\lambda_i+\eps).
    \end{align*}
    This proves (a). By property (ii) there is a sufficiently
    small $r>0$ such that solutions $(x,\lambda)\in \mathcal{O}$ with $\lambda_i+\eps\leq \lambda\leq \lambda_{i+1}-\eps$
    for $i=0,\ldots,k-1$ satisfy $(x,\lambda)\notin \ov{B_r(\mathcal T)}$ and $(x,\lambda)\notin \partial
    O\sm B_r(\mathcal T)$ follows from property (i).
    So, Lemma~\ref{lem:hom_inv} implies for $i=0,\ldots,k-1$
    \begin{align*}
      \nu(\lambda_i+\eps) 
      &=  d(F(\cdot,\lambda_i+\eps), (\mathcal{O} \sm \ov{B_r(\mathcal T)} )_{\lambda_i+\eps},0)   \\
      &=  d(F(\cdot,\lambda_{i+1}-\eps), (\mathcal{O} \sm \ov{B_r(\mathcal T)} )_{\lambda_{i+1}-\eps},0)   \\
      &= \nu(\lambda_{i+1}-\eps) 
    \end{align*}
    so that (b) is proved, too. Claim (c) follows again from Lemma~\ref{lem:hom_inv} and the
    fact that $\mathcal{O}$ is bounded. Indeed, for all
    $r>0$ we have 
    \begin{equation} \label{eq:Lem_munuI}
      d(F(\cdot,\lambda), (\mathcal{O} \sm \ov{B_r(\mathcal T)})_\lambda,0) 
      = d(F(\cdot,\lambda),\emptyset,0) 
      = 0
      \quad\; \text{if }\lambda\leq \lambda_*:= \inf \pr(\mathcal{O}). 
    \end{equation}
    As above, by (i) and (ii) we may choose $r>0$ such that all solutions $(x,\lambda)\in\ov{\mathcal
    O}$ with $\lambda_*\leq \lambda\leq \lambda_0-\eps$ satisfy
    $(x,\lambda)\notin \ov{B_r(\mathcal T)}$ as well as $(x,\lambda)\notin \partial O\sm B_r(\mathcal
    T)$.
    Hence, we get  from \eqref{eq:Lem_munuI}
    \begin{align*}
      \nu(\lambda_0-\eps)
      = d(F(\cdot,\lambda_0-\eps), (\mathcal{O}\sm  \ov{B_r(\mathcal T)} )_{\lambda_0-\eps},0)  
      = d(F(\cdot,\lambda_*),(\mathcal{O}\sm  \ov{B_r(\mathcal T)} )_{\lambda_*},0) 
      = 0. 
    \end{align*}
    The analogous reasoning gives $\nu(\lambda_k+\eps)=0$.
    
    \medskip
        
    From (a),(b),(c) we deduce
    \begin{align*}
      \sum_{i=0}^k  \big(\mu(\lambda_i+\eps)-\mu(\lambda_i-\eps)\big)  
   	  &= - \sum_{i=0}^k \big( \nu(\lambda_i+\eps)-\nu(\lambda_i-\eps) \big) \\
   	  &= \nu(\lambda_k-\eps) - 
   	  \sum_{i=0}^{k-1} \big( \nu(\lambda_i+\eps)-\nu(\lambda_i-\eps) \big)  \\ 
   	  &= \nu(\lambda_k-\eps) - 
   	  \sum_{i=0}^{k-1} \big( \nu(\lambda_{i+1}-\eps)-\nu(\lambda_i-\eps) \big)  \\ 
   	  &= \nu(\lambda_k-\eps) -  \nu(\lambda_k-\eps)+\nu(\lambda_0-\eps)  \\
   	  &= 0
    \end{align*}
    so that it remains to rewrite this identity in the form~\eqref{eq:degree_balance_RabDan}. 
    To this end we use that for  $|\lambda-\lambda_i|\leq \eps$ sufficiently
    small the slices $(\mathcal O\cap \mathcal T)_\lambda$ consist of precisely $m_i+1$ distinct points that converge to
    the points $x_{ij},j=0,\ldots,m_i$ as $\lambda\to \lambda_i$. Notice that at this point we use that
    none of these points is a turning point of $\mathcal T$.     
    Invoking the Leray-Schauder index formula
    (see for instance Theorem~8.10 in~\cite{Deim} or Lemma~3.19 in~\cite{AmbMal:Nonlinear_analysis}) we
    arrive for $\eps>0$ sufficiently small at
    \begin{align*}
      \mu(\lambda_i+\eps)-\mu(\lambda_i-\eps)
      &= \sum_{(x,\lambda_i+\eps)\in \mathcal O\cap \mathcal T} \ind(F(\cdot,\lambda_i+\eps),x) 
       -\sum_{(x,\lambda_i-\eps)\in \mathcal O\cap \mathcal T} \ind(F(\cdot,\lambda_i-\eps),x) \\
      &= \sum_{(x,\lambda_i+\eps)\in \mathcal O\cap \mathcal T} \ind(F_x(x,\lambda_i+\eps),0) 
       -\sum_{(x,\lambda_i-\eps)\in \mathcal O\cap \mathcal T} \ind(F_x(x,\lambda_i-\eps),0)  \\
      &= \sum_{j=0}^{m_i} \delta^*(x_{ij},\lambda_i;0). 
    \end{align*}
    These identities finally imply 
    \begin{align*}
      0 
      = \sum_{i=0}^k \big( \mu(\lambda_i+\eps) - \mu(\lambda_i-\eps)\big) 
      = \sum_{i=0}^k \sum_{j=0}^{m_i} \delta^*(x_{ij},\lambda_i;0) 
      = \sum_{(x,\lambda)\in\mathcal C\cap\mathcal T} \delta^*(x,\lambda;0).
    \end{align*} 
    %\b{In particular, this proves the alternative from Theorem~\ref{thm:DanRab} and we are done.} 
    \qed
    
%   \begin{align} \label{eq:def_delta}
%     \delta(\lambda) 
%     &:= \lim_{\eps\to 0^+} \mu(\lambda+\eps)-\mu(\lambda-\eps)
%     = \sum_{(x,\lambda)\in\mathcal T} \delta^*(x,\lambda),\quad \\
%   \end{align} 
%   Trivial solution curve: $\delta(\lambda)=\delta^*(0,\lambda)$  
%   and in view of formula \eqref{eq:LSIndex_formula} we get    
%   \begin{equation} \label{eq:formula_delta}
%     \delta(\lambda) 
%     = \mu(\lambda^+) (1-(-1)^{m_0(\lambda)}) 
%     = \begin{cases}
%       2\mu(\lambda^+) &, \text{if }m_0(\lambda) \text{ is odd}, \\
%       0  &, \text{if }m_0(\lambda) \text{ is even.}   
%     \end{cases}
%   \end{equation}
%  
  
%   The equation~\eqref{eq:degree_balance_0} is fundamental for our results on the existence of secondary
%   bifurcations, which will become clear in the proof of Lemma~\ref{lem:bifurcation}. The central idea is a
%   variant of Dancer's Corollary~2 \cite{Dancer_On_the_structure} which may be summarized as follows: If
%   $\lambda_1,\ldots,\lambda_k \in \pr(\mathcal{T}\cap\mathcal{C})$ and 
%   $\sum_{i=1}^k \delta(\lambda_i)\neq 0$ then $\mathcal C$ actually connects to at least one more
%   trivial solution with parameter value $\lambda\notin \{\lambda_1,\ldots,\lambda_k\}$. The next section deals
%   with the implementation of this idea in the context of symmetry breaking and secondary bifurcation.
%     

  \section{Appendix B: On assumption (A2)}
  
%   In this Section we want to motivate the assumption of locally uniform differentiability of 
%   $F$ along the trivial solution $\mathcal T$ in the context of Theorem~\ref{thm:DanRab}.
%   Our aim is to show that we can not replace it by the weaker assumption $F'\in C(\mathcal T,L(X,X))$
%   used by Kielh\"ofer in his version of Rabinowitz' Global Bifurcation Theorem (for $\mathcal
%   T=\{(0,\lambda):\lambda\in\R\}$) from Theorem~II.3.3~\cite{Kielhoefer} We even show that under this
%   assumption the result is in general false. The crucial observation in this context is that  
%   $F'\in C(\mathcal T,L(X,X))$ is not sufficient in order to guarantee that bifurcation points are
%   degenerate. In particular, the claim of Proposition~\ref{prop:unif_diff} can not be true under this
%   assumption. Before going on with the construction of a counterexample, let us mention that uniform
%   differentiability along $\mathcal T$ implies $F'\in C(\mathcal T,L(X,X))$, but not vice versa as we will see
%   below.

  In this Section we motivate the assumption of locally uniform differentiability of 
  $F$ along the trivial solution $\mathcal T$ in the context of Proposition~\ref{prop:unif_diff}.   
%   
%   We show that this Proposition need not hold if $F$ is Our aim is to show that we can not replace it in (A2)
%   by the weaker assumption $F'\in C(\mathcal T,L(X,X))$, which is for instance used by Kielh\"ofer in his version of Rabinowitz' Global Bifurcation Theorem from
%   Theorem~II.3.3~\cite{Kielhoefer} in the special case $\mathcal T=\{(0,\lambda):\lambda\in\R\}$. 
%   
%   To this end
  We provide an example for a not locally uniformly differentiable function $F:X\times\R\to X$ with
  $F(0,\lambda)=0$ for all $\lambda\in\R$ such that the set of bifurcation points is not discrete even though
  the set of degenerate solutions on $\mathcal T$ is. In particular, this shows that
  Proposition~\ref{prop:unif_diff} cannot hold without this assumption.
   
  \medskip

  The starting point for the construction of a counterexample is a differentiable function
  $f:\R\times\R\to\R$ with $f|_{\mathcal T}=0$ such that $f$ is not locally uniformly differentiable along
  $\mathcal T$, but $\lambda\mapsto f'(0,\lambda)$ is continuous. We define
  $M:= \max_{z\in\R}\sin^2(z)/z>0$ and its unique maximizer $z^*\approx 1.165561$. Then
  the following function has the above-mentioned properties: 
  $$
    f(x,\lambda) :=  x - M^{-1} \sin^2(x\lambda^{-3}) \lambda^3\quad\text{for }x\in\R,\lambda\neq 0,
    \qquad
    f(x,0) := x \quad\text{for }x\in\R
  $$
  Moreover, we have $f_x(0,\lambda)= 1$ for all $\lambda\in\R$ and $(0,0)$ is a bifurcation
  point because of $f(x_\lambda,\lambda)=0$ for all $\lambda\in\R$ where 
  $x_\lambda:= z^* \lambda^3$. Notice that there are no other nontrivial solutions. We conclude:
  \begin{equation*}
    (0,0) \text{ is a bifurcation point for }f(x,\lambda)=0 \text{ with }f_x(0,0)\neq 0.
  \end{equation*} 
  In other words $(0,0)$ is a nondegenerate bifurcation point for this equation in the sense we defined at
  the beginning of Section~\ref{sec:rabinowitz}. We stress  that
  this is possible due to the fact that $f$ is not locally uniformly differentiable along $\mathcal T$ and
  in particular not continuously differentiable in a neighbourhood of $\mathcal T$. In fact, one has 
  $x_\lambda\to 0$ as $\lambda\to 0$ and 
  $$
    \frac{f(x_\lambda,\lambda)-f(0,\lambda)-f_x(0,\lambda)x_\lambda}{x_\lambda}
    =  \frac{\sin^2(x_\lambda\lambda^{-3})}{M x_\lambda \lambda^{-3}}
    = \frac{\sin^2(z^*)}{Mz^*}
    = 1 
    \not\to 0 \quad\text{as }\lambda\to 0.
  $$ 
  In the next Lemma, this function is used for the construction of a counterexample.

  \begin{lem} \label{lem:counterexample}
    There is a differentiable function $F:\R\times\R\to \R$ satisfying $F(0,\lambda)=0$ for all
    $\lambda\in\R$ and such that the following holds for $\lambda_0\in\R$:
    \begin{itemize}
      \item[(i)] %$F$ is Fr\'{e}chet-differentiable in each $(0,\lambda)$ and 
      The map $\lambda\mapsto F'(0,\lambda)$ is continuous on $\R$,
      \item[(ii)] $F_x(0,\lambda_0)=0$ and 
      $F_x(0,\lambda)(\lambda-\lambda_0)> 0$ for all $\lambda\neq \lambda_0$,
      \item[(iii)] there is a sequence of bifurcation points $(0,\lambda_n)$ for  $F(x,\lambda)=0$
      s.t. $\lambda_n\to \lambda_0$ as $n\to\infty$.
    \end{itemize}
  \end{lem}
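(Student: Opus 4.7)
Without loss of generality take $\lambda_0=0$. The approach is to glue together infinitely many rescaled translates of the motivating example $f$, one at each point of a sequence $\lambda_n\to 0$, in such a way that $F_x(0,\lambda)=\lambda$ holds globally. Condition (ii) is then automatic, each glued copy will produce a nondegenerate bifurcation point $(0,\lambda_n)$ exactly as for $f$, and the failure of locally uniform differentiability of the glued components is precisely what permits these nondegenerate bifurcations to accumulate.

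\textbf{Construction.} Set $\lambda_n:=1/n$, pick pairwise disjoint open intervals $I_n$ centered at $\lambda_n$ with length at most $1/n^2$, and smooth cutoffs $\chi_n\in C^\infty_c(I_n,[0,1])$ equal to $1$ near $\lambda_n$. Then define
\[
F(x,\lambda):=\lambda x-M^{-1}\sum_{n\ge 1}\chi_n(\lambda)(\lambda-\lambda_n)^3\sin^2\!\Bigl(\frac{x}{(\lambda-\lambda_n)^3}\Bigr),
\]
with each summand continuously extended by $0$ at $\lambda=\lambda_n$. Because the $\chi_n$ have pairwise disjoint supports, at each $\lambda$ only one term is active, so convergence is trivial and the local structure around every $\lambda_n$ is an exact translate of $f$.

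\textbf{Conditions (i)--(iii).} Both (i) and (ii) are immediate: $F(0,\lambda)=0$ forces $F_\lambda(0,\lambda)=0$, and the second-order vanishing of $\sin^2$ at $0$ gives $F_x(0,\lambda)=\lambda$. For (iii) I would work locally near each $\lambda_n$ (where only $\chi_n$ is active), substitute $\lambda=\lambda_n+h$ and $x=zh^3$, and reduce $F(x,\lambda)=0$ to the scalar identity $(\lambda_n+h)Mz=\sin^2 z$. Since $\max_{z>0}\sin^2(z)/z=M$ and $\lambda_n+h<1$ for $n\ge 2$ and $|h|$ small, this admits a solution $z_h\in(0,z^*)$, producing nontrivial solutions converging to $(0,\lambda_n)$.

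\textbf{Main obstacle.} The hard part is showing that $F$ really is differentiable everywhere and that $\lambda\mapsto F'(0,\lambda)$ is continuous at $0$. Away from the critical set $F$ is smooth; at each $(x_0,\lambda_n)$ the uniform bound $|(\lambda-\lambda_n)^3\sin^2(x/(\lambda-\lambda_n)^3)|\le|\lambda-\lambda_n|^3$ reproduces the differentiability argument used for the motivating $f$. The truly delicate point is the accumulation point $(0,0)$: for $\lambda\in I_n$ one has $|\lambda-\lambda_n|\le 1/n^2$ and $|\lambda|\ge\lambda_n-1/n^2\ge 1/(2n)$, so the sum is bounded by $M^{-1}/n^6=O(1/n^5)\cdot|\lambda|=o(|x|+|\lambda|)$, the estimate $|I_n|\le 1/n^2$ being what forces exactly this decay. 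Combined with $|\lambda x|=o(|x|+|\lambda|)$ this yields $F(x,\lambda)=o(|x|+|\lambda|)$, hence differentiability at $(0,0)$ with $F'(0,0)=(0,0)$; continuity of $F'(0,\cdot)$ at $0$ then follows from $F'(0,\lambda)=(\lambda,0)\to(0,0)$.
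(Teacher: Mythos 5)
Your construction is correct and is essentially the paper's own: both glue cutoff-localized, rescaled copies of the motivating function $f$ (the $\mu^3\sin^2(x\mu^{-3})$ oscillation) at a sequence $\lambda_n\to\lambda_0=0$, with an overall factor of $\lambda$ on the linear part forcing (ii), and both verify differentiability at the accumulation point via the bound $|\mu^3\sin^2(x\mu^{-3})|\le|\mu|^3$. Your variant (disjoint cutoff supports, the term $\lambda x$ pulled out so that $F_x(0,\lambda)=\lambda$ exactly, and a short intermediate-value argument solving $(\lambda_n+h)Mz=\sin^2 z$ in place of the paper's exact zeros $x_\lambda=z^*\lambda^3$ of $f$) is a cosmetic simplification, not a different route, and all steps check out.
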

  \begin{proof}
    W.l.o.g. we may assume $\lambda_0=0$. Let $f$ be defined as above, let $\chi\in C_0^\infty(\R)$ be a
    smooth cut-off function such that $\chi(z)=1$ for $|z|\leq \frac{1}{2}$, $0<\chi(z)<1$ for $|z|<1$ and
    $\chi(z)=0$ for $|z|\geq 1$ and define
    \begin{align*}
      F(x,\lambda)
      &:= \lambda \Big( \sum_{k\in\Z} \chi(a2^k(\lambda-2^{-k}))   f(x,\lambda-2^{-k}) + 
      \sum_{k\in\Z} \chi(a2^k(\lambda+2^{-k}))   f(x,\lambda+2^{-k}) \Big)
          % \r{+f(x,\lambda-\lambda_0)??}
%      &\;\; + (\lambda-\lambda_0)\sum_{k=0}^\infty \chi(a2^k(\lambda-\lambda_0+2^{-k}))
%          f(x,\lambda-\lambda_0+2^{-k})
    \end{align*}
    where $a\in (2,3)$. Our aim is to verify the above-mentioned properties for
    $\lambda_n:=2^{-n}$. First let us mention that the $k$-th summand in the first series
    may not be zero only if $\lambda\in (\frac{a-1}{a}2^{-k},\frac{a+1}{a} 2^{-k})$. So, for all $\lambda>
    0$ we can find a small open neighbourhood of $\lambda$ and $k_\lambda\in\Z$ such that the second sum
    is zero and the $k$-th summand in the first series vanishes on this neighbourhood whenever
    $k\notin\{k_\lambda,k_\lambda +1\}$. Here, $a>1$ is used. The analogous reasoning applies to $\lambda<0$. 
    So the well-definedness and differentiability of $F$ at points $(0,\lambda)$ with $\lambda\neq 0$
    follows from the corresponding statements about $f$. Moreover, we have $F(0,\lambda)=0$ for all
    $\lambda\in\R$. Let us prove the claims (i)--(iii).

    \medskip

    \noindent {\it Proof of (i):} For $\lambda\neq 0$ we have $F_\lambda(0,\lambda)=0$ and
    \begin{equation}\label{eq:Fx_formula}
      F_x(0,\lambda) = \lambda \Big( \sum_{k\in\Z} \chi(a2^{k}(\lambda-2^{-k})) + \sum_{k\in\Z}
      \chi(a2^{k}(\lambda+2^{-k})) \Big).
    \end{equation}
    So (i) is proved once we show that $F'(0,0)$ exists with
    $F_x(0,0)=F_\lambda(0,0)=0$. Indeed, we have for $x,\lambda\to 0$
    \begin{align*}
      |F(x,\lambda)|
      &\leq |\lambda|  \sum_{k\in\Z} \big( |\chi(a2^{k}(\lambda-2^{-k}))| |f(x,\lambda-2^{-k})| 
        + |\chi(a2^{k}(\lambda+2^{-k}))| |f(x,\lambda+2^{-k})| \big)
       \\
      &\leq 2|\lambda| |x|  \sum_{k\in\Z} \big( |\chi(a2^{k}(\lambda-2^{-k}))|+
      |\chi(a2^{k}(\lambda+2^{-k}))|\big)  \\
      &\leq 4 |\lambda||x| \;=\; o(|x|+|\lambda|).
    \end{align*}
    Here we used that $\chi(a2^{k}(\lambda\pm 2^{-k}))$ is non-zero for at most two indices $k$.

    \medskip

    \noindent {\it Proof of (ii):}  For any given $\lambda>0$ we can choose $k\in\Z$ such that  
    $\lambda\in (\frac{a-1}{a}2^{-k},\frac{a+1}{a} 2^{-k})$. This is due to $a<3$. So
    $\chi(a2^{k}(\lambda-2^{-k}))>0$ and thus $F_x(0,\lambda)\lambda>0$ in
    view of ~\eqref{eq:Fx_formula}. The analogous reasoning applies to $\lambda<0$, which implies~(ii).

    \medskip

    \noindent {\it Proof of (iii):} We show that bifurcation occurs at $\lambda_n=2^{-n}$. Indeed,
    for $0<|\lambda|\leq \min\{\frac{a-1}{2a},\frac{a-2}{a}\}2^{-n}$, which is possible due to $a>2$, we
    have
    \begin{align*}
      a2^{k}|(2^{-n}+\lambda)-2^{-k}|
      &= a|2^{k-n}(1-2^n\lambda)-1|
      \geq a\Big(2\cdot \big(1-\frac{a-1}{2a}\big)-1\Big)
      = 1
    &&\text{if }k>n, \\
    a 2^k|(2^{-n}+\lambda)-2^{-k}|
    &\geq a\big(1-2^k(|\lambda|+2^{-n})\big)
    \geq a\Big(1-2^k\cdot \frac{2a-2}{a} 2^{-n}\Big)
    \geq a \big(1- \frac{a-1}{a}\big)
    = 1
    &&\text{if }k<n.
  \end{align*}
  The same inequalities hold for $a 2^k|(2^{-n}+\lambda)+2^{-k}|$ and all $k\in\Z$. So we get 
  \begin{align*}
    F(x_\lambda,2^{-n}+\lambda)
    &= (2^{-n}+\lambda)\Big[ \sum_{k\in\Z}  \underbrace{\chi(a2^{k}(2^{-n}+2^{-k}+\lambda))}_{=0}
    f(x_\lambda,2^{-n}+2^{-k}+\lambda) \\
    &\qquad\quad\quad + \sum_{k\in\Z, k\neq n}  \underbrace{\chi(a2^{k}(2^{-n}-2^{-k}+\lambda))}_{=0}
    f(x_\lambda,2^{-n}-2^{-k}+\lambda) \\
    &\qquad\quad\quad  + \chi(a2^n\lambda)  \underbrace{f(x_\lambda,\lambda)}_{=0}\Big] \\
    &= 0.
  \end{align*}
  Hence, $(0,\lambda_n)$ is a bifurcation point, which is all we had to show.
  \end{proof}   
  
%   
%   
%   To demonstrate assume that (A1) and (A2) hold
%   for $\mathcal T=\{(0,\lambda):\lambda\in\R\}$, but locally uniform differentiability along $\mathcal T$ is
%   replaced by the existence and continuity of the Fr\'{e}chet derivatives with respect to $\lambda$.
%   At the bottom of p.205 in~\cite{Kielhoefer} it is claimed, roughly speaking, that 
%   $\ker(F_x(0,\lambda_0))\neq \{0\}$ and $\ker(F_x(0,\lambda))=\{0\}$ for $\lambda\in
%   (\lambda_0-\delta,\lambda_0)\cup (\lambda_0,\lambda_0+\delta)$ ensure that none of these $\lambda$ is a
%   bifurcation point. (Notice that this a direct consequence of the Implicit Function Theorem, if $F$ is 
%   continuously differentiable in a neighbourhood of $(0,\lambda_0)$.) More precisely, Kielh\"ofer
%   claims that there is positive continuous function $\lambda\mapsto r(\lambda)$ on
%   $(\lambda_0-\delta,\lambda_0+\delta)$ such that all nontrivial solutions $(x,\lambda)$ satisfy
%   $\|x\|>r(\lambda)$, see (II.3.8). Let us show that this statement may be false under his assumptions. 
%   In fact, a positive function $r$ with these properties indeed exists, but it can in general not be chosen
%   to be continuous.

  \section*{Acknowledgements} 
    The work on this project was supported by the Deutsche Forschungsgemeinschaft (DFG,
    German Research Foundation) through the Collaborative Research Center 1173.

\bibliographystyle{plain}
\bibliography{doc}

\begin{thebibliography}{10}

\bibitem{AmbMal:Nonlinear_analysis}
A.~Ambrosetti and A.~Malchiodi.
\newblock {\em Nonlinear analysis and semilinear elliptic problems}, volume 104
  of {\em Cambridge Studies in Advanced Mathematics}.
\newblock Cambridge University Press, Cambridge, 2007.

\bibitem{BaDaWa:bifurcations}
T.~Bartsch, E.~N. Dancer, and Z.-Q. Wang.
\newblock A {L}iouville theorem, a-priori bounds, and bifurcating branches of
  positive solutions for a nonlinear elliptic system.
\newblock {\em Calc. Var. Partial Differential Equations}, 37(3-4):345--361,
  2010.

\bibitem{BaTiWa:Bifurcations}
T.~Bartsch, R.~Tian, and Z.-Q. Wang.
\newblock Bifurcations for a coupled {S}chr\"odinger system with multiple
  components.
\newblock {\em Z. Angew. Math. Phys.}, 66(5):2109--2123, 2015.

\bibitem{BKR_sec_bif}
L.~Bauer, H.~B. Keller, and E.~L. Reiss.
\newblock Multiple eigenvalues lead to secondary bifurcation.
\newblock {\em SIAM Rev.}, 17:101--122, 1975.

\bibitem{Boehme}
R.~B\"ohme.
\newblock Die {L}\"osung der {V}erzweigungsgleichungen f\"ur nichtlineare
  {E}igenwertprobleme.
\newblock {\em Math. Z.}, 127:105--126, 1972.

\bibitem{BrClMar:symbrea}
J.~Bracho, M.~Clapp, and W.~Marzantowicz.
\newblock Symmetry breaking solutions of nonlinear elliptic systems.
\newblock {\em Topol. Methods Nonlinear Anal.}, 26(1):189--201, 2005.

\bibitem{Cer:sym_bre}
G.~Cerami.
\newblock Symmetry breaking for a class of semilinear elliptic problems.
\newblock {\em Nonlinear Anal.}, 10(1):1--14, 1986.

\bibitem{Coffman:nonlinear_bvp}
C.~V. Coffman.
\newblock A nonlinear boundary value problem with many positive solutions.
\newblock {\em J. Differential Equations}, 54(3):429--437, 1984.

\bibitem{CR_bif_simple}
M.~G. Crandall and P.~H. Rabinowitz.
\newblock Bifurcation from simple eigenvalues.
\newblock {\em J. Functional Analysis}, 8:321--340, 1971.

\bibitem{Dancer_On_the_structure}
E.~N. Dancer.
\newblock On the structure of solutions of non-linear eigenvalue problems.
\newblock {\em Indiana Univ. Math. J.}, 23:1069--1076, 1973/74.

\bibitem{Dan:breaking_of_symmetries}
E.~N. Dancer.
\newblock Breaking of symmetries for forced equations.
\newblock {\em Math. Ann.}, 262(4):473--486, 1983.

\bibitem{Dan_global_breaking}
E.~N. Dancer.
\newblock Global breaking of symmetry of positive solutions on two-dimensional
  annuli.
\newblock {\em Differential Integral Equations}, 5(4):903--913, 1992.

\bibitem{Deim}
K.~Deimling.
\newblock {\em Nonlinear functional analysis}.
\newblock Springer-Verlag, Berlin, 1985.

\bibitem{GNN_symmetry}
B.~Gidas, W.~M. Ni, and L.~Nirenberg.
\newblock Symmetry and related properties via the maximum principle.
\newblock {\em Comm. Math. Phys.}, 68(3):209--243, 1979.

\bibitem{Kielhoefer}
H.~Kielh\"ofer.
\newblock {\em Bifurcation theory}, volume 156 of {\em Applied Mathematical
  Sciences}.
\newblock Springer, New York, second edition, 2012.
\newblock An introduction with applications to partial differential equations.

\bibitem{Kras:Topological}
M.~A. Krasnosel'skii.
\newblock {\em Topological methods in the theory of nonlinear integral
  equations}.
\newblock The Macmillan Co., New York, 1964.

\bibitem{KuMoTsYo_2ndarybif}
K.~Kuto, T.~Mori, T.~Tsujikawa, and S.~Yotsutani.
\newblock Secondary bifurcation for a nonlocal {A}llen-{C}ahn equation.
\newblock {\em J. Differential Equations}, 263(5):2687--2714, 2017.

\bibitem{Lin:nonradial_bifurcation}
S.-S. Lin.
\newblock On non-radially symmetric bifurcation in the annulus.
\newblock {\em J. Differential Equations}, 80(2):251--279, 1989.

\bibitem{Lugiato_Lefever1987}
L.~A. Lugiato and R.~Lefever.
\newblock Spatial dissipative structures in passive optical systems.
\newblock {\em Phys. Rev. Lett.}, 58:2209--2211, 1987.

\bibitem{Ma_Thesis}
R.~Mandel.
\newblock {\em Grundzust\"{a}nde, Verzweigungen und singul\"{a}re L\"osungen
  nichtlinearer Schr\"{o}dingersysteme}.
\newblock PhD thesis, Karlsruhe Institute of Technology (KIT), 2013.

\bibitem{MaRe}
R.~Mandel and W.~Reichel.
\newblock A priori bounds and global bifurcation results for frequency combs
  modeled by the {L}ugiato-{L}efever equation.
\newblock {\em SIAM J. Appl. Math.}, 77(1):315--345, 2017.

\bibitem{Marino}
A.~Marino.
\newblock La biforcazione nel caso variazionale.
\newblock {\em Confer. Sem. Mat. Univ. Bari}, (132):14, 1973.

\bibitem{Maw_LSdegree}
J.~Mawhin.
\newblock Leray-{S}chauder degree: a half century of extensions and
  applications.
\newblock {\em Topol. Methods Nonlinear Anal.}, 14(2):195--228, 1999.

\bibitem{Miy_nonexistence}
Y.~Miyamoto.
\newblock Non-existence of a secondary bifurcation point for a semilinear
  elliptic problem in the presence of symmetry.
\newblock {\em J. Math. Anal. Appl.}, 357(1):89--97, 2009.

\bibitem{Rab:global}
P.~H. Rabinowitz.
\newblock Some global results for nonlinear eigenvalue problems.
\newblock {\em J. Functional Analysis}, 7:487--513, 1971.

\bibitem{SmoWas_symbre}
J.~Smoller and A.~G. Wasserman.
\newblock Bifurcation and symmetry-breaking.
\newblock {\em Invent. Math.}, 100(1):63--95, 1990.

\bibitem{Sri:symbreak}
P.~N. Srikanth.
\newblock Symmetry breaking for a class of semilinear elliptic problems.
\newblock {\em Ann. Inst. H. Poincar\'e Anal. Non Lin\'eaire}, 7(2):107--112,
  1990.

\end{thebibliography}

\end{document}